\newtheorem{lemma}{Lemma}[section]
\newtheorem{proposition}{Proposition}[section]
\newtheorem{corollary}{Corollary}[section]
\newtheorem{theorem}{Theorem}[section]
\theoremstyle{definition}
\newtheorem{definition}{Definition}[section]
\newtheorem{assumption}{Assumption}[section]
\newcommand{\In}{\subseteq}
\newcommand{\IN}{\mathbb{N}}
\newcommand{\IZ}{\mathbb{Z}}
\newcommand{\IQ}{\mathbb{Q}}
\newcommand{\IR}{\mathbb{R}}
\newcommand{\ld}{\mathrm{ld}}
\newcommand{\ivl}[1]{{\boldsymbol #1}}
\journal{Mathematics and Computers in Simulation}
\begin{document}

\begin{frontmatter}

\title{Computational Complexity of Iterated Maps on the Interval}
\author{Christoph Spandl}
\address{Computer Science Department,
Universit\"{a}t der Bundeswehr M\"{u}nchen,
D-85577 Neubiberg, Germany}
\ead{christoph.spandl@unibw.de}

\begin{abstract}
The correct computation of orbits of discrete dynamical systems on the
interval is considered. Therefore, an arbitrary-precision floating-point
approach based on automatic error analysis is chosen and a general algorithm
is presented. The correctness of the algorithm is shown and the
computational complexity is analyzed. There are two main results.
First, the computational complexity measure considered here is related
to the Lyapunov exponent of the dynamical system under consideration.
Second, the presented algorithm is optimal with regard to that
complexity measure.
\end{abstract}
\begin{keyword}
Discrete dynamical systems \sep Lyapunov exponent \sep arbitrary-precision
floating-point arithmetic

\MSC 37M05 \sep 65P20
\end{keyword}

\end{frontmatter}

%%%%%%%%%%%%%%%%%%%%%%%%%%%%%%%%%%%%%%%%%%%%%%%%%%%%%%%%%%%%%%%%%%%%%%%%%%%%%%%%%%%%%%
\section{Introduction}

Consider a discrete dynamical system $(D,f)$ on some compact interval $D\In\IR$,
called the phase space, given by a function $f:D\to D$, a recursion relation
$x_{n+1}=f(x_n)$ and an initial value $x_0\in D$. The sequence $(x_n)_n$ of
iterates is called the orbit of the dynamical system in phase space corresponding
to the initial value $x_0$. If such a dynamical system is implemented, that is
a computer program is written for calculating a finite initial segment of
the orbit for given $x_0$, care has to be taken in choosing the appropriate
data structure for representing real numbers. Traditionally, IEEE 754 {\tt double}
floating-point numbers \cite{ieee08, go91} are used. However,
if the dynamical system shows chaotic behavior, a problem arises. The finite
and constant length of the significand of a {\tt double} variable causes rounding
errors which are magnified after each iteration step. Already after a few
iterations, the error is so big that the computed values are actually
useless. For example in \cite{lo93, Mue01} this phenomenon is examined for the
dynamical system $(D,f)$ with $D=[0,1]$, $f(x)=3.75\cdot x\cdot(1-x)$ and
the initial value $x_0=0.5$. To put things right, a rigorous method
for computations with real numbers has to be used. There already exist
some rigorous numerical methods in the field of dynamical systems and chaos
\cite{hy87, gh90, sy91, nr93, rn94, mr95}.

In the next section, a rigorous method based on arbitrary-precision floating
point arithmetic is presented and used to investigate the iteration of a
generalization of the above mentioned function. Correctness of the results are
obtained by using a method called running error analysis. The method and the
numerics are compared to interval arithmetic. In Section \ref{main-sec},
the algorithm is generalized to arbitrary functions $f$. The aim of the present
paper is to give bounds on some kind of space complexity of the algorithm.
To be more precise, the behavior of the the length of the significand
in arbitrary-precision arithmetic is analyzed in the task of iterates of
discrete dynamical systems. The minimal length of the significand needed for
floating-point numbers such that any computed point of an initial segment of
the orbit has a specified and guaranteed accuracy is examined. This minimal
length  will be related to the length of the initial segment of the orbit.
To cope with this task, a precise mathematical framework for floating-point
computations is applied. This framework should be suited to computability concepts
over the reals. Finally, a complexity measure for describing the computational
effort on arbitrary-precision floating-point numbers is introduced. Roughly
speaking, it is the ratio of the length of the significand to the number of
iterations in the limit of number of iterations to infinity. The first main
result shows that this complexity measure is related to the Lyapunov exponent.
The second main result proves that the presented algorithm 
to compute the orbit up to any given accuracy is optimal with respect
to that complexity measure. As a consequence, these results give some advice
for economically designing reliable algorithms simulating one-dimensional
discrete dynamical systems.

%%%%%%%%%%%%%%%%%%%%%%%%%%%%%%%%%%%%%%%%%%%%%%%%%%%%%%%%%%%%%%%%%%%%%%%%%%%%%%%%%%%%%%
\section{Dynamic behavior of the logistic equation and rounding error}

In this section, the discrete dynamical system $(D,f_\mu)$ with $D=[0,1]$ and
$f_\mu:D\to D$, $f_\mu(x):=\mu x(1-x)$ for some control parameter $\mu\in(0,4]$
is investigated. In the literature, the recursion relation $x_{n+1}=f_\mu(x_n)$ is
called the {\em logistic equation} \cite{ce80}. When implementing the logistic
equation on a real computer and demanding to obtain true values for the
orbit $(x_n)_n$, some rigorous method is needed. Since for some values of
$\mu$ the dynamics is highly chaotic, inaccuracies are magnified exponentially in
time \cite{ce06, hsd04}. Therefore, it is clear that using floating-point numbers
with a predefined, fixed precision makes sense only if the maximum iteration
time $N$ also is a predefined, fixed number. If the algorithm should work
for any $N$, a more elaborate approach is needed. First one can
work with arbitrarily high precision floating-point numbers, the precision
dynamically set and the error control implemented in the algorithm separately.
A software package for doing this task is for example MPFR \cite{Fousse:2007:MMP}.
Second, there are methods with automatic
error control, for example interval arithmetic \cite{mo66,ah83}, the Feasible Real
RAM model \cite{BH98} or significance arithmetic \cite{mrt75}. Implementations
are for example MPFI \cite{ReRo02}, the iRRAM \cite{Mue01} and Mathematica
\cite{ss05} respectively.

All these methods have the same theoretical background. They are all practical
instances of the model of Computable Analysis \cite{wh00,pr89,ko91} used in
computer science.
While the Feasible Real RAM model directly implements the theory of Computable
Analysis, the other mentioned methods all have their background in scientific
computing. Looking closer at the various validated methods in use, they all have
in common implementing some kind of intervals for representing real numbers
numerically. Therefore, the starting point here is looking at interval arithmetic
for computing orbits $(x_n)_n$. For any time step $n$, let the phase point
$x_n$ together with its computational error be represented by two floating-point
numbers $x^l_n$ and $x^u_n$ ($x^u_n\geq x^l_n$) with given length $m_n$ of the
significand, called the precision, forming an interval $[x^l_n,x^u_n]$. The
interval is an enclosure of the real value $x_n$, that is $x_n\in[x^l_n,x^u_n]$
for all $n$. The interval length $d_n:=x^u_n-x^l_n$ gives a measure of the
uncertainty about $x_n$ and is therefore a kind of error. Interval arithmetic
often models quantities which are not known exactly. But here, the true orbit
$(x_n)_n$ can be, in principle, calculated to any given accuracy. Thus, in
the present setting, the true object of interest is not an interval, but an
approximation $\hat{x}_n$ of $x_n$ together with an absolute error $e_n$. The
interval is only used for mathematical convenience. To transform the interval
to a floating point value $\hat{x}_n$ of precision $m_n$,  just do
\begin{equation}
\label{val_def}
\hat{x}_n:=rd\left(\frac{x^l_n + x^u_n}{2},m_n\right)
\end{equation}
where $rd(x,m)$ performs a rounding to some floating-point number of
precision $m$ nearest to $x$.
Note that rounding to nearest is not unique if $x$ is equidistant
from two floating-point numbers.
The absolute error $e_n:=|\hat{x}_n - x_n|$ of $\hat{x}_n$ can
be estimated via the interval length $d_n$ by
\begin{equation}
\label{err_def}
e_n\leq\frac{1}{2}d_n + r_n
\end{equation}
where $r_n$ is an error caused by the rounding operation $rd(.,.)$ in
Equation (\ref{val_def}). An upper bound on $r_n$ will be discussed later,
for now it suffices to say that in general it is small compared to $d_n$.

The aim now is to calculate, for given initial value $x=x_0$, $N\in\IN$
and $p\in\IZ$ the orbit up to time $N$
with relative error at most $10^{-p}$. That is, for
$(\hat{x}_n)_{0\leq n\leq N}$ it should hold
\begin{equation}
\label{prec_req}
e_n=|\hat{x}_n - x_n|\leq 10^{-p}x_n \leq 10^{-p}.
\end{equation}
Why using here and in the following the relative error and not the absolute
error is discussed in some detail at the end of Subsection \ref{comp-and-corr}.
The minimal $m$, fulfilling the precision requirement (\ref{prec_req}) on
the relative error of $x_n$, which depends on $x$, $N$ and $p$, is
denoted by $m_{min}(x,N,p)$. Now, a central
quantity of this work is introduced, which is some complexity measure.
Consider the growth rate of $m_{min}(x,N,p)$,
\begin{equation*}
\sigma(x,p)=\limsup_{N\to\infty}\frac{m_{min}(x,N,p)}{N}.
\end{equation*}
The {\em loss of significance rate} $\sigma(x)$, which may depend on
the initial value $x$ is given by
\begin{equation*}
\sigma(x)=\lim_{p\to\infty}\sigma(x,p).
\end{equation*}
This quantity describes the limiting amount of significant precision
being lost at each iteration step in the limit of infinite output
precision. Significant means here the part of
the digits being correct. A general treatment of this complexity measure
is given in the next section. Roughly speaking,
$\lceil\sigma(x_0,p) N + p\cdot\ld(10)\rceil$ is the precision
for any floating-point number needed in an algorithm doing the iteration
starting with $x_0$ and calculating to $x_N$, if the output should be
precise to at least $p$ decimal places. Here,  $\ld(.)$ is the logarithm
to base $2$.

\subsection{Dynamic behavior of the logistic equation}
Before analyzing the different numerical behavior, it is worth having an
analytical look at the dynamical behavior of the system. Despite
the fact that these results are well known \cite{hsd04, de89}, they are
reviewed here for the sake of self containment.

First have a look at the fixed points of the logistic equation and their stability.
In the range $D=[0,1]$, the equation possesses exactly one
fixed point $x^o=0$ if $\mu\in(0,1]$ and exactly two fixed points $x^o=0$
and $x^{(\mu)}=1-\frac{1}{\mu}$ if $\mu\in(1,4]$. Looking at the derivatives
$f'_\mu(x^o)=\mu$ and $f'_\mu(x^{(\mu)})=2-\mu$ gives the stability of the
fixed points. Since $|f'_\mu(x^o)|<1$ for $\mu\in(0,1)$ and $|f'_\mu(x^o)|>1$
for $\mu\in(1,4]$, $x^o$ is a stable fixed point, an attractor for
$\mu\in(0,1)$ and an unstable fixed point, a repeller for $\mu\in(1,4]$.
If $\mu=1$, the only fixed point $x^o$ is hyperbolic, that is $|f'_1(x^o)|=1$.
At $\mu=1$, a bifurcation occurs. If
$\mu\in(1,3)$, $x^o$ becomes unstable and the newly occurring fixed point
$x^{(\mu)}$ is stable. At $\mu=3$ a second bifurcation occurs and for $\mu>3$
both fixed points are unstable.

Second, examine the basin of attraction of the stable fixed point.
If $\mu\in(0,1)$, the contraction mapping principle directly gives
$\lim_{n\to\infty}f^n_\mu(x)=x^o$ for all $x\in[0,1]$. If $\mu=1$,
observe that $f_1(x)<x$ holds for all $x\in(0,1)$. Hence, any sequence
$(f^n_1(x))_n$, $x\in(0,1)$, is strictly decreasing and bounded from
below. So, also $\lim_{n\to\infty}f^n_1(x)=x^o$ holds for all $x\in[0,1]$.
Finally, in the case $\mu\in(1,3)$, $\lim_{n\to\infty}f^n_\mu(x)=x^{(\mu)}$
holds for all $x\in[0,1]$. For a proof, the interested reader is referred
to the literature:\cite{de89}, Proposition 5.3 in Section 1.5.

Finally, for $\mu>3$ the system goes into a region showing periodic
behavior with period doubling bifurcations. Finally, for some
$\mu<4$, chaotic behavior is reached.

This analysis shows that in the parameter range $\mu\in(0,3)$,
the orbit tends to the stable fixed point for any initial value
$x\in[0,1]$. Furthermore, there exists some closed interval $I\In D$,
which depends on $\mu$, containing the stable fixed point such that
$f_\mu(I)\In I$ holds and $f_\mu$ is a contraction on $I$. Next
have a look at the computational effort in the various control
parameter ranges.

\subsection{Numerical analysis of the computational complexity}
The logistic equation is implemented in various forms using an
arbitrary-precision interval library. For that purpose, the already
mentioned interval library MPFI based on the arbitrary-precision
floating-point number library MPFR, both written in C, is used.
For each control parameter $\mu$ ranging from $0.005$ to $4$ and a
step size of $0.005$, the orbit for initial value $x=0.22$ is
calculated up to $N=2000$. For each $\mu$, the minimum
precision $m_{min}$ needed to guarantee $e_n\leq 10^{-6}x_n$
for $n=0,\dots,N$ is searched. Then, $\sigma_{est}:=m_{min}/N$
is calculated. First, $f_\mu$ is implemented using a natural
interval extension based on the expressions $\mu x(1-x)$,
$\mu(x-x^2)$ and $\frac{\mu}{4}-\mu(x-\frac{1}{2})^2$. The natural
interval extension is obtained by replacing any occurrence of the
variable $x$ in the expression by an interval $\ivl{x}$ \cite{rr84}.
The results are shown in Figures \ref{fig_sig_int1}, \ref{fig_sig_int2}
and \ref{fig_sig_int3} respectively. Second, the logistic equation is
implemented using a centered form, actually the mean value form
\cite{rr84, lo93}: $T_{1,\mu}(\ivl{x})=f_\mu(\mathrm{mid}(\ivl{x}))+
\ivl{f}'_\mu(\ivl{x})(\ivl{x}-\mathrm{mid}(\ivl{x}))$ where $\ivl{x}$
is an interval and $\mathrm{mid}(\ivl{x})$ is the midpoint of $\ivl{x}$.
The result is shown in Figure \ref{fig_sig_int4}. In the following,
these 4 calculations are referred to as 1 to 4 respectively.

The interval computation is in agreement with the dynamical picture only in
Calculation 4. While for $\mu\in(0,1)$, the results shown in Calculations
1, 2 and 4 are in agreement with the dynamical analysis, 3 is not since it
would suggest an exponential divergence of initially nearby orbits which is
not true in reality. A similar situation occurs for $\mu\in(1,3)$. Here,
the Calculations 3 and 4 are in agreement with the dynamical picture,
1 and 2 on the other hand not. The picture does not change if
$\mu\geq 3$ and hence it can be said that in the range $\mu\in(1,4]$,
the Calculations 3 and 4 are in agreement with the dynamic picture, while
1 and 2 are not. How can this be explained?

\subsection{Investigating Calculation 1}
This subsection deals with the explanation of the curve obtained by
Calculation 1.
For doing an error analysis of the logistic equation analytically, some
idealizing assumptions have to be made. Generally, executing the iteration in
interval arithmetic, two types of error are present. First, error propagation
solely due to the iteration and second the newly added rounding error caused
by the calculation of $f_\mu$. In the following, only the error propagation
is regarded. This means that there is only one primary made error caused by
rounding the initial value $x=x_0$ to some floating-point number of some specified
precision $m$. The next idealization is that the value of $\mu$ is assumed to
be given with such a high precision that no interval representation is needed.
Finally, the value of $r_n$ in Equation (\ref{err_def}) is neglected. The
recursion relation then reads
\begin{align*}
x^l_{n+1} &= \mu x^l_n(1-x^u_n)\\
x^u_{n+1} &= \mu x^u_n(1-x^l_n)
\end{align*}
with the interval length $d_n$  given by the recursion relation
\begin{align*}
d_{n+1} &= x^u_{n+1} - x^l_{n+1} = \mu(x^u_n - x^u_n x^l_n - x^l_n + x^u_n x^l_n)\\
 &= \mu d_n
\end{align*}
with the obvious solution $d_n=\mu^ n d_0$. The absolute error $e_n$
of $\hat{x}_n$ according to Equation (\ref{val_def}) can be bounded from
above by
\begin{equation}
\label{err_bound}
e_n\leq\frac{1}{2}d_n=\frac{1}{2}\mu^n d_0.
\end{equation}
The ideal assumptions require the somewhat unreal setting
that the precision has to be set to some finite, but big enough value $m$
for representing $x_0$ and a virtually infinite value $m_\infty$ for doing the
iteration. To get a relation connecting $m$ and the output precision $p$ in
(\ref{prec_req}), some upper bound on $d_0$ is needed. The value of $d_0$
is given as the rounding error by representing $x_0$ as a floating-point number
of precision $m$. For that, the well known estimate
\begin{equation}
\label{round_zero}
d_0\leq 2^{-m+1}x_0\leq 2^{-m+1}
\end{equation}
exists. Combining (\ref{prec_req}), (\ref{err_bound}) and (\ref{round_zero})
gives as a sufficient condition
\begin{equation*}
\mu^n\cdot 2^{-m}\leq 10^{-p}
\end{equation*}
for $n=0,\dots,N$. So, the sufficient condition gives an upper bound on
$m_{min}(x,N,p)$ by
\begin{equation*}
m_{min}(x,N,p)\leq \lceil p\cdot\ld(10) + N\cdot\max(0, \ld(\mu))\rceil.
\end{equation*}
This finally leads to an upper bound for the loss of significance rate,
\begin{equation*}
\sigma(x,p)\leq\sigma(x)\leq\max(0, \ld(\mu)).
\end{equation*}

The curve in Figure \ref{fig_sig_int1} shows that $\sigma_{est}$
exceeds the estimated bound $\max(0,\ld(\mu))$ only slightly. So,
the above made ideal assumptions seem to be valid. In \cite{Mue01},
the logistic equation was also investigated for $\mu=3.75$ using
the exact real arithmetic package iRRAM. In the paper, the maximum
bounding precision needed to guarantee the correctness of the first $6$
decimal places are reported up to $N=100000$. Relating this quantity
to $m_{min}$ shows full agreement with the simulation results performed
here. So, for $\mu > 1$, the interval length $d_n$ increases
exponentially in time $n$ which is in contrast to the dynamic behavior
for $\mu\in(1,3)$. The reason is that the natural interval approach
implicitly, due to the dependency problem, takes account only of the
global behavior of $f_\mu$ in the form of a global Lipschitz constant
$\max\{|f'_\mu(x)| : x\in D\}=\mu$. However, a local Lipschitz constant
$\max\{|f'_\mu(x)| : x\in [x^l_n,x^u_n]\}$ governs the real error
propagation at time step $n$ and also describes the dynamic behavior.
\begin{figure}[htb]
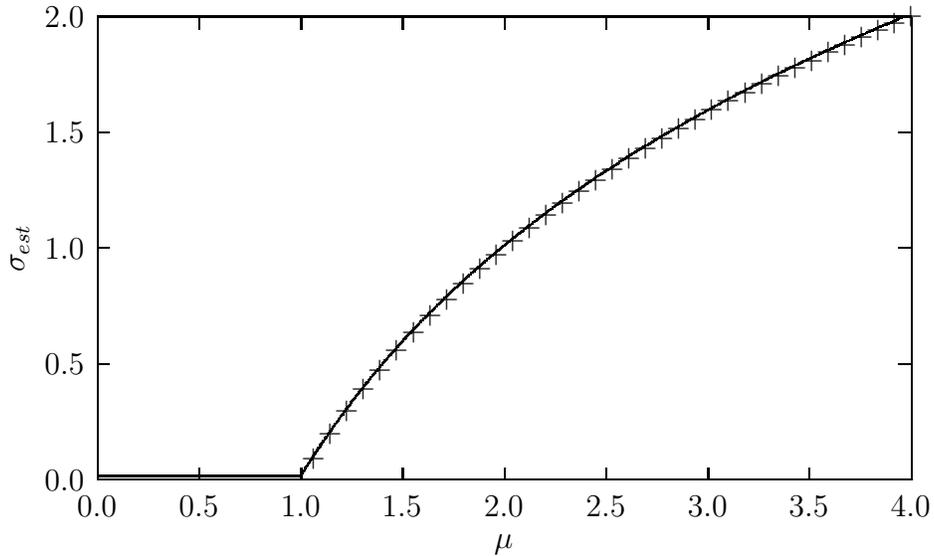

\begin{center}
\include{figure1}
\caption{\label{fig_sig_int1} Estimated loss of significance rate for the
logistic equation, formula $\mu x(1-x)$. The crosses indicate the theoretical
curve from error analysis.}
\end{center}
\end{figure}

\subsection{Investigating Calculation 2}
Calculation 2 is similar to Calculation 1. An analogous analytic
approach as in Calculation 1 gives the recursion relation
\begin{align*}
x^l_{n+1} &= \mu(x^l_n - (x^u_n)^2)\\
x^u_{n+1} &= \mu(x^u_n - (x^l_n)^2)
\end{align*}
and hence
\begin{equation*}
d_{n+1} = \mu d_n + \mu((x^u_n)^2 - (x^l_n)^2)
 = \mu d_n(1+x^u_n+x^l_n).
\end{equation*}

\begin{figure}[htb]
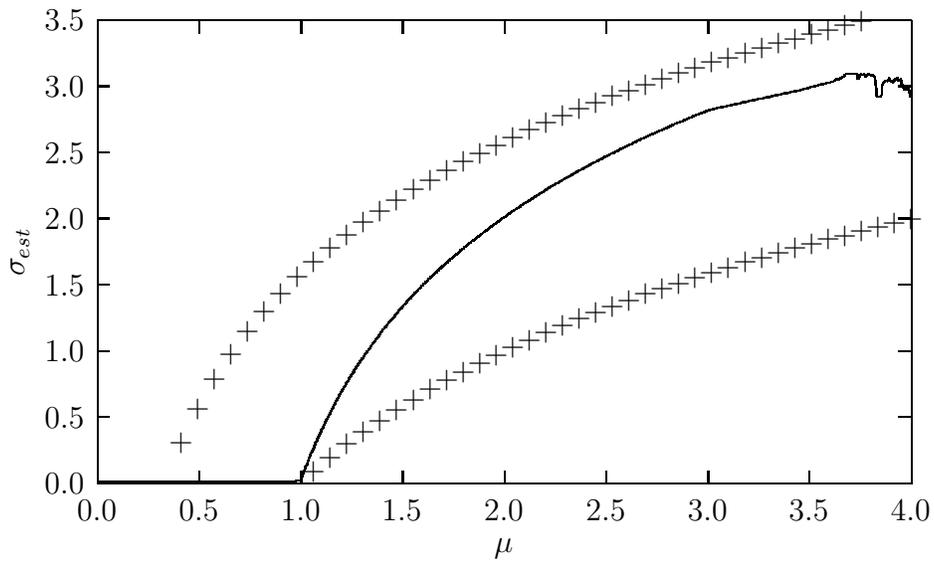

\begin{center}
\include{figure2}
\caption{\label{fig_sig_int2} Estimated loss of significance rate for the
logistic equation, formula $\mu(x-x^2)$. The crosses indicate the theoretical
curves from error analysis.}
\end{center}
\end{figure}
The recursion relation fulfills therefore $d_{n+1}\geq\mu d_n$
and $d_{n+1}\leq 3\mu d_n$. As a consequence the bounds
$\mu^n d_0 \leq d_n\leq (3\mu)^n d_0$ are obtained. In analogy
to Calculation 1, an upper bound for $m_{min}(x,N,p)$,
\begin{equation*}
m_{min}(x,N,p)\leq \lceil p\cdot\ld(10) + N\cdot\max(0, \ld(3\mu))\rceil
\end{equation*}
and hence
\begin{equation*}
\sigma(x,p)\leq\sigma(x)\leq\max(0, \ld(3\mu))
\end{equation*}
is calculated. A brief look at Figure \ref{fig_sig_int2}
shows that this upper bound is too rough. On the other hand,
$d_n\geq \mu d_0$ suggests that the bound derived in Calculation 1
is a lower bound, hence
$\max(0, \ld(\mu))\leq\sigma(x)\leq\max(0, \ld(3\mu))$.
This is actually verified by numerical evidence.

\subsection{Investigating Calculation 3}
Calculation 3 is explained here in the parameter range
$\mu\in(0,1)$, where it is not in agreement with the dynamic
picture. Nevertheless it should be mentioned that the natural interval
extension used here seems to be in full agreement with the dynamic
picture in the parameter range $\mu\in[1,4]$ as is suggested by
Figure \ref{fig_sig_int3}. The curve seems to be identical to
Figure \ref{fig_sig_int4} in the range $\mu\in[1,4]$.

\begin{figure}[htb]
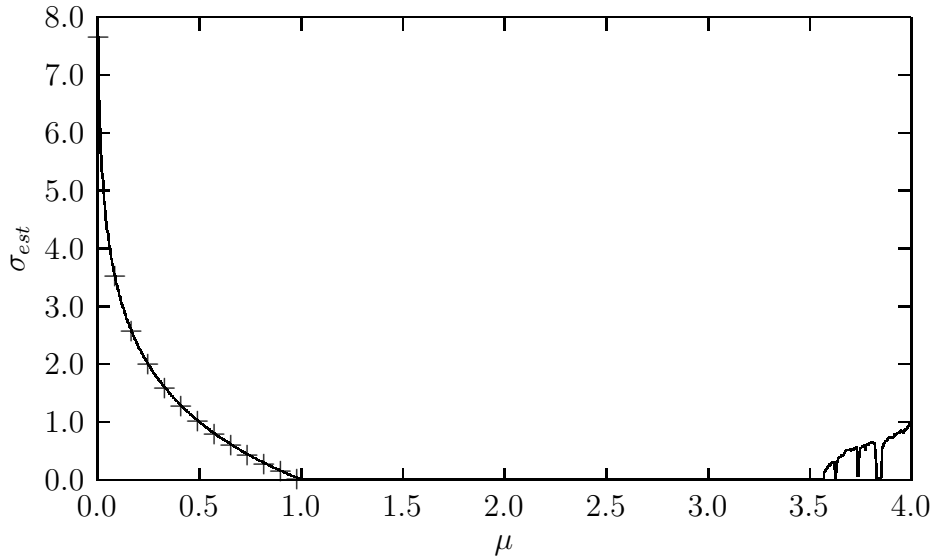

\begin{center}
\include{figure3}
\caption{\label{fig_sig_int3} Estimated loss of significance rate for the
logistic equation, formula $\frac{\mu}{4}-\mu(x-\frac{1}{2})^2$. The
crosses indicate the theoretical curve from error analysis.}
\end{center}
\end{figure}
To explain the observed behavior, first note that for
$\mu<1$, $f_\mu(x)\leq\mu x$ follows for all $x\in[0,1]$. Hence,
$x_n \leq\mu^n x_0$ holds for all $n\in\IN$ and the orbit
$(x_n)_n$ tends exponentially fast to zero. A brief look at the
expression of $f_\mu(x)$,
$f_\mu(x_n)=\frac{\mu}{4}-\mu(x_n-\frac{1}{2})^2$, shows that the
value  of the second term in the difference tends exponentially
fast in $n$ to the value of the first term. Hence, the big values
of the loss of significance rate for small values of $\mu$ can be
explained by cancellation. So, the behavior may be explained
solely by a typical phenomenon of floating-point arithmetic
and not an effect due to the dependency problem in interval
arithmetic. To give an analytical description of the problem,
it is easier now to chance from interval notation to classical
error analysis notation.

First note that even $\hat{x}_0$ may differ from the initial value
$x$ since the conversion to a floating-point number may cause the
very first rounding error. Next, already mentioned, in calculating
the orbit $(\hat{x}_n)_n$, two types of error are present.
First, error propagation due to the iteration scheme and second the
rounding error caused by the calculation of $f_\mu$. Now, let $\hat{x}_n$
for some $n\in\IN$ be given. Then the true error after one iteration step
is $\hat{x}_{n+1} - x_{n+1}$. Since in reality not $f_\mu(\hat{x}_n)$ is
calculated but some erroneous approximation $\hat{f_\mu}(\hat{x}_n)$,
the true error can be written as
$\hat{x}_{n+1} - x_{n+1} = \hat{f}_\mu(\hat{x}_n)-f_\mu(x_n)$.
Inserting a constructive zero gives a sum
\begin{equation}
\label{main_err}
\hat{x}_{n+1} - x_{n+1} = (f_\mu(\hat{x}_n)-f_\mu(x_n)) +
 (\hat{f_\mu}(\hat{x}_n) - f_\mu(\hat{x}_n))
\end{equation}
of two terms. The first term describes solely the error propagation
while the second term gives exactly the newly produced error
due to the approximate calculation of $f_\mu$.

Let us fix some $n\in\IN$ and consider the absolute error of
$ \hat{x}_{n+1}$. To get the formulas more compact, set
$g_\mu(x):=\mu(x-\frac{1}{2})^2$. Then,
\begin{align*}
|\hat{x}_{n+1} - x_{n+1}| &=
|\widehat{(\widehat{(\tfrac{\mu}{4})} - \hat{g}_\mu(\hat{x}_n))} -
(\tfrac{\mu}{4} - g_\mu(x_n))|\\
 &\leq |\widehat{(\widehat{(\tfrac{\mu}{4})} - \hat{g}_\mu(\hat{x}_n))} -
(\widehat{(\tfrac{\mu}{4})} - \hat{g}_\mu(\hat{x}_n))| +
|\widehat{(\tfrac{\mu}{4})} - \tfrac{\mu}{4}|\\
 &\quad + |\hat{g}_\mu(\hat{x}_n) - g_\mu(x_n)|\\
 &\leq |\widehat{(\widehat{(\tfrac{\mu}{4})} - \hat{g}_\mu(\hat{x}_n))} -
(\widehat{(\tfrac{\mu}{4})} - \hat{g}_\mu(\hat{x}_n))| +
|\widehat{(\tfrac{\mu}{4})} - \tfrac{\mu}{4}|\\
 &\quad + |\hat{g}_\mu(\hat{x}_n) - g_\mu(\hat{x}_n)| +
 |g_\mu(\hat{x}_n) - g_\mu(x_n)|
\end{align*}
follows. Let $m$ be assumed to be the actual precision under
calculation at time $n$. The last term in the previous inequality can
be estimated the following way. As discussed in \cite{wi63}, the
rounding error produced in calculating $g_\mu$ can be estimated by
\begin{equation}
\label{est_wil}
|\hat{g}_\mu(\hat{x}_n)-g_\mu(\hat{x}_n)|
\leq 1.06K2^{-m}|g_\mu(\hat{x}_n)|
\end{equation}
where $K$ is the number of rounding operations performed in computing
$\hat{g}_\mu$. In the case considered here, $K=4$ follows. It is
further crucial to mention that the factor $1.06$ is only valid if
$K\leq 0.1\cdot 2^m$ holds so that the precision must not be
chosen too small. Furthermore, with
$|g_\mu(\hat{x}_n) - g_\mu(x_n)|\leq \mu|\hat{x}_n - x_n|$
it follows
\begin{align*}
|\hat{x}_{n+1} - x_{n+1}| &\leq 2^{-m}|\widehat{(\tfrac{\mu}{4})} -
\hat{g}_\mu(\hat{x}_n)| + 2^{-m}\cdot\tfrac{\mu}{4} +
 1.06K2^{-m}|g_\mu(\hat{x}_n)|\\
 &\quad + \mu|\hat{x}_n - x_n|\\
 &\leq 2^{-m}(|\widehat{(\tfrac{\mu}{4})}| + |\hat{g}_\mu(\hat{x}_n)|
 + \tfrac{\mu}{4} + 1.06K|g_\mu(\hat{x}_n)|) + \mu|\hat{x}_n - x_n|\\
 &\leq 2^{-m}((1+2^{-m})\tfrac{\mu}{4} + (1+1.06K2^{-m})
 |g_\mu(\hat{x}_n)| + \tfrac{\mu}{4}\\
 &\quad + 1.06K|g_\mu(\hat{x}_n)|)
 + \mu|\hat{x}_n - x_n|\\
 &\leq 2^{-m}\tfrac{\mu}{4}(1 + 2^{-m} + 1 + 1.06K2^{-m} + 1 + 1.06K)\\
 &\quad + \mu|\hat{x}_n - x_n|\\
 &\leq C\mu 2^{-m} + \mu|\hat{x}_n - x_n| 
\end{align*}
where $C>0$ holds. In other words, one obtains the recursion
relation $e_{n+1}\leq \mu e_n + C\mu 2^{-m}$. Iterating the recursion
gives $e_{n+1}\leq C\mu 2^{-m}\sum_{k=0}^n\mu^k + \mu^{n+1}e_0\leq
C\frac{\mu}{1-\mu}2^{-m} + \mu^{n+1} 2^{-m}x_0$.

As already mentioned, $x_n$ is bounded from above by
$x_n\leq \mu^n x_0$. To come to a sufficient condition
for the precision, also a lower bound is needed. First
observe that $f_\mu(x)\geq\mu x(1-a)$ holds for all $x\leq a$,
$a,x\in[0,1]$. Hence, for $\mu<1$, $x_{n+k}\geq \mu^n x_k(1-x_k)^n$
follows. This gives the sufficient condition
\begin{equation*}
C\tfrac{\mu}{1-\mu}2^{-m} + \mu^{n+1} 2^{-m}x_0
\leq 10^{-p}\mu^{n+1-k}x_k(1-x_k)^{n+1-k}
\leq 10^{-p}x_{n+1}
\end{equation*}
on the precision. Note that $n+1\geq k\geq 0$.
Then, an upper bound on $m_{min}(x_0,N,p)$ is given by
\begin{equation*}
m_{min}(x,N,p)\leq\lceil p\cdot\ld(10) +
(N - k)(\ld(\tfrac{1}{\mu}) - \ld(1-x_k)) + C'\rceil
\end{equation*}
with $C'=\ld(C\frac{\mu}{1-\mu}+\mu^N x_0)-\ld(x_k)$.
This leads to an upper bound on the loss of significance
rate given by $\sigma(x,p)\leq\ld(\frac{1}{\mu})-\ld(1-x_k)$
for all $k\in\IN$. Since $x_k\to 0$ follows for $k\to\infty$,
the final result on the loss of significance rate is
\begin{equation*}
\sigma(x,p)\leq\sigma(x)\leq\ld(\tfrac{1}{\mu}).
\end{equation*}
The curve in Figure \ref{fig_sig_int2} shows that this
upper bound is in full agreement with the numeric result.

\subsection{Investigating Calculation 4}
The observation at the end of the subsection describing Calculation 1
directly leads to the already introduced mean value form.
The calculation is shown in Figure \ref{fig_sig_int4}.
This calculation is the optimum of both, Calculation 1 and 3. The
curve reflects in the parameter range $\mu\in(0,3)$ well the dynamic
behavior.

\begin{figure}[htb]
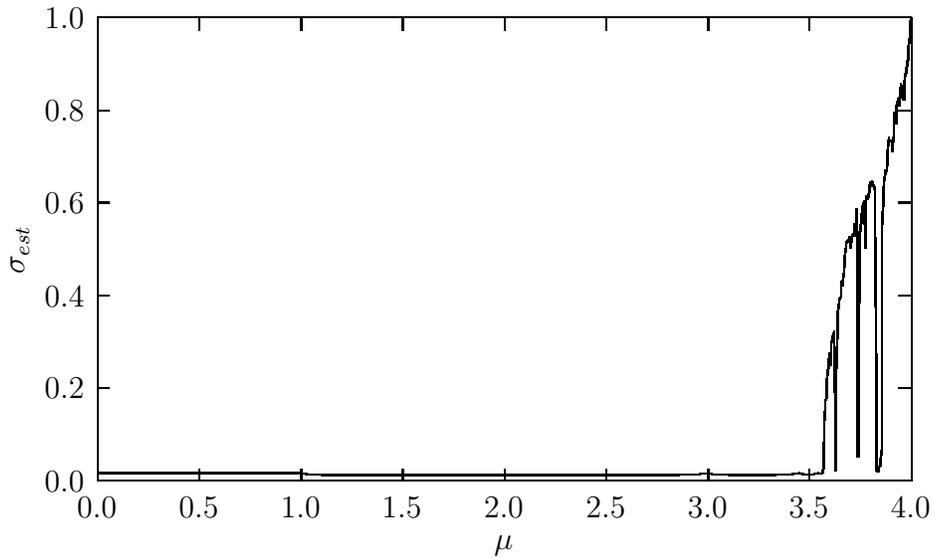

\begin{center}
\include{figure4}
\caption{\label{fig_sig_int4} Estimated loss of significance rate for the
logistic equation, meanvalue form.}
\end{center}
\end{figure}
Furthermore, in the range $\mu\in[3,4]$, the curve suggests
a relation between the loss of significance rate and the Lyapunov
exponent $\lambda(x)$ for the logistic map:
\begin{equation*}
\sigma(x)=\frac{1}{\ln(2)}\max(0,\lambda(x))
\end{equation*}
for all $\mu\in(0,4]$. For a curve of the Lyapunov exponent of the
logistic map see \cite{ce80}. This relation will be shown in the next
section for general dynamical systems on the interval. Furthermore, it
will be shown that the algorithm based on Calculation 4 is optimal in
some sense.

But before, some crucial reflections governing the analysis in the next
section. The mean value form representation, on which the calculation
is based, can also be seen from a different viewpoint. Have again
a look at Equation (\ref{main_err}). The true error is the sum of
the error propagation (first term) according to the iteration and the
rounding error due to the computation of $f_\mu$ (second term). The
first term of Equation (\ref{main_err}) can be handled using the mean
value theorem,
$|f_\mu(\hat{x}_n)-f_\mu(x_n)|=|f'_\mu(y_n)|\cdot |\hat{x}_n - x_n|$
with $y_n\in[\hat{x}_n-e_n,\hat{x}_n+e_n]$. This gives directly the
bound
\begin{equation*}
|f_\mu(\hat{x}_n)-f_\mu(x_n)|\leq
\sup(|f'_\mu([\hat{x}_n-e_n,\hat{x}_n+e_n])|)e_n.
\end{equation*}
The second term can be estimated in a similar way as was done in
(\ref{est_wil}) by
\begin{equation*}
|\hat{f}_\mu(\hat{x}_n)-f_\mu(\hat{x}_n)|\leq
1.06K2^{-m}|f_\mu(\hat{x}_n)|
\end{equation*}
where $K=4$ because there are 3 arithmetic operations and the
rounding of $\mu$. Using the fact that $f_\mu(x)\leq\frac{\mu}{4}$
holds and $f_\mu(x)<x$ if $\mu\leq 1$, the unknown value
$|f_\mu(\hat{x}_n)|$ can be estimated from above.
This calculation shows that there exists a recursive equation
on an upper bound $\overline{e}_n$ on $e_n$ for all $n$:
\begin{equation}
\label{calc_err}
\overline{e}_{n+1}=L(\hat{x}_n,\overline{e}_n)\overline{e}_n+
1.06K2^{-m}E_\mu(\hat{x}_n),\quad \overline{e}_0=2^{-m}
\end{equation}
with $L(x,e):=\sup(|f'_\mu([x-e,x+e])|)$ and
\begin{equation*}
E_\mu(x):=\left\{\begin{array}{ll}
x & {\rm if\ } \mu\leq 1\\
\frac{\mu}{4} & {\rm if\ } \mu >1
\end{array}\right..
\end{equation*}
This description, which is in line with the analysis of
Calculation 3, is equivalent to the interval description
using the mean value form. Instead of using intervals,
pairs of the form value $\hat{x}_n$ and corresponding guaranteed
error bound $\overline{e}_n$ is used. This approach is an
automated error analysis called {\em running error analysis}
\cite{hi02}. From a technical point of view, the representation as
value and error has the advantage that the rounded values $\hat{x}_n$
are calculated as usual in floating-point arithmetic except that
arbitrary-precision floats are used. The guaranteed error bounds
may be calculated using interval arithmetic according to (\ref{calc_err}),
to really guarantee a validated bound. Only a fixed precision is needed
for calculating the error bounds. Similar results as in Figure 
\ref{fig_sig_int4} are reported in \cite{Bla05} by using a method
analog to the one presented here \cite{bl06}. However, the connection
to the Lyapunov exponent is not made in \cite{Bla05}.

Before continuing, three remarks. First, interval libraries are primarily
divided int two types concerning their representation of an interval
\cite{ru99}: There exist libraries using the infimum-supremum representation
of intervals, like MPFI, and there exist libraries using the midpoint-radius
representation of intervals. If arbitrary precision is needed, the inf-sup
libraries have the disadvantage that two floating-point variables with high
precision are needed to represent an interval. Contrary to that, like the
value and error description, in mid-rad libraries only the midpoint of the
interval needs a high precision floating-point variable. The radius can be
stored in a floating-point variable which need not have a high precision.
Clearly, the mid-rad concept has an computational advantage in the case
considered here over the inf-sup concept. But the dependency problem of
interval arithmetic persists.
Second, also the iRRAM package implements mid-rad intervals and has
therefore to cope with the dependency problem. However, it also permits an
optimized way for computing the iteration based on a similar algorithm as
described above \cite{mu05}.
Third it should be mentioned that, executing the first three presented
calculations in Mathemathica using significance arithmetic, exactly the
same results are obtained. This shows that also significance arithmetic
suffers from the dependency problem as interval arithmetic does. This is
already noted in \cite{st74}.

%%%%%%%%%%%%%%%%%%%%%%%%%%%%%%%%%%%%%%%%%%%%%%%%%%%%%%%%%%%%%%%%%%%%%%%%%%%%%%%%%%%%%%
\section{The general algorithm and its complexity}
\label{main-sec}
Let $D$ be a compact real interval and $f:D\to D$ a self mapping. In the following,
$f$ is assumed to be continuous on $D$, two times continuously differentiable on
$D$ and $f''$ is bounded. Furthermore, $f$ and $f'$ are assumed
to be computable in the sense of Computable Analysis. The definition of
a computable real function is given below.

In this section, a general algorithm for computing the iteration
\begin{equation}
\label{main_it}
x_{n+1}=f(x_n),\quad x_0\in D
\end{equation}
is presented. To be more precise, for given $x\in\IQ$, $N\in\IN$ and
$p\in\IZ$, this algorithm computes a finite part $(\hat{x}_n)_{0\leq n\leq N}$
of length $N$ of the true orbit $(x_n)_{n\in\IN}$ with initial value $x_0=x$.
Each computed value $\hat{x}_n$ of this finite trajectory has a relative error
of at most $10^{-p}$: $|\hat{x}_n-x_n|\leq 10^{-p}|x_n|$ for all $n=0,1,\dots,N$.
The correctness of the algorithm and its relation to Computable Analysis is shown.
Finally, its complexity is examined.

\subsection{Computability issues and specifying the algorithm}
The set of all computationally accessible real numbers are the
floating-point numbers of arbitrary precision and arbitrary exponent
range denoted by $\hat{\IR}$. A floating-point number is a real number of
the form $\hat{x}=s\cdot 2^{e-t}$ where $t\in\IN$ is the {\em precision},
$e\in\IZ$ the {\em scale} and $s\in\IZ$ where $|s|\in\{0,1,\dots,2^t -1\}$
is called the {\em significand}. To get a unique representation of $\hat{x}$
for given $t$, $|s|\geq 2^{t-1}$ is assumed if $\hat{x}\neq 0$ and $e=0$ if
$\hat{x}=0$.  Since actually no bound is assumed on the precision and the scale,
the set $\hat{\IR}\In\IR$ is the set of the dyadic real numbers and therefore
countable infinite. Thus, $\hat{\IR}$ forms a natural basis for computability
considerations over finite
objects. Consider some floating-point number $\hat{x}\in\hat{\IR}$, then
the scale and the precision are two properties of different type. While the
scale is a direct function of the value of $\hat{x}$, the precision is clearly
not. Reversely, let $x\in\IR$ be some real number and $\hat{x}\in\hat{\IR}$
a floating-point number representing $x$. Then the scale of $\hat{x}$ is
generally determined by $x$ while the precision can be chosen arbitrary.
Regarding $\hat{x}$ as a data structure, then $\hat{x}$ has as its
essential property the precision. In object oriented notation, the
precision of $\hat{x}$ can be written as $\hat{x}.t$.

Any real number $x$ is represented in an algorithm concerning numerical
computation by a pair $\ivl{x}$ consisting of a floating
point number $\ivl{x}.fl\in\hat{\IR}$ of arbitrary precision
$\ivl{x}.fl.t$ approximating $x$ and a floating-point number
$\ivl{x}.err\in\hat{\IR}$ of
fixed precision giving an upper bound on the absolute error,
$|\ivl{x}.fl-x|\leq\ivl{x}.err$. Reversely, any such pair
$\ivl{x}\in\hat{\IR}^2$ can be seen as
the real interval $[\ivl{x}.fl-\ivl{x}.err,\ivl{x}.fl+\ivl{x}.err]$.
If $x\in[\ivl{x}.fl-\ivl{x}.err,\ivl{x}.fl+\ivl{x}.err]$ holds
for some $x\in\IR$, then $\ivl{x}$ is called an {\em approximation}
of $x$. To represent a single real number, a sequence $(\ivl{x}_n)_{n\in\IN}$
of such pairs $\ivl{x}$ are needed. A sequence $(\ivl{x}_n)_{n\in\IN}$
is called a {\em floating-point name} of a real number $x$, if
any $\ivl{x}_n$ approximates $x$, $\lim_{n\to\infty}\ivl{x}_n.fl=x$,
$\lim_{n\to\infty}\ivl{x}_n.fl.t=\infty$ and
$\lim_{n\to\infty}\ivl{x}_n.err=0$ holds. Clearly any real number has
a floating-point name.

As already indicated, it is a straightforward task to define what
a computable function $\hat{f}:\hat{\IR}\to\hat{\IR}$ is by using
classical computability theory over finite objects. Additionally,
computability over integers, computability of functions with mixed
arguments and computable predicates are defined in the same manner
\cite{Wei87}. Consider a function $f:D\to D$, $D\In\IR$ and a pair
$\ivl{f}$ of two functions $\ivl{f}.fl:\hat{\IR}\to\hat{\IR}$ and
$\ivl{f}.erf:\hat{\IR}^2\to\hat{\IR}$ having the following property.
For any approximation $\ivl{x}$ of some real number $x\in D$, the pair
$\ivl{f}(\ivl{x})=(\ivl{f}.fl(\ivl{x}.fl),\ivl{f}.erf(\ivl{x}))$
is an approximation of $f(x)$. Thus, $\ivl{f}.erf$ gives an
upper bound on the absolute error of $\ivl{f}.fl(\ivl{x}.fl)$,
$|\ivl{f}.fl(\ivl{x}.fl)-f(x)|\leq \ivl{f}.erf(\ivl{x})$. Considering
$\ivl{f}$ as an interval function, the above property is just the
fundamental property of interval arithmetic, \cite{rr88} Property 2.12.
Then, $\ivl{f}$ is called an {\em approximation function} for $f$. Now
consider an approximation
function $\ivl{f}$ for $f$ such that for all $x\in D$ and any floating
point name $(\ivl{x}_n)_{n\in\IN}$ of $x$, $(\ivl{f}(\ivl{x}_n))_{n\in\IN}$
is a floating-point name of $f(x)$. Such an approximation function
is called {\em approximation-continuous}. Additionally, if the two
functions $\ivl{f}.fl$ and $\ivl{f}.erf$ of an approximation function
$\ivl{f}$ are computable, then $\ivl{f}$ is called a
{\em computable approximation function}. Finally, $f:D\to D$ is called
{\em computable}, if there exists a computable approximation function
$\ivl{f}$ for $f$ which is approximation-continuous.

The algorithm with the specification described at the beginning of
this section reads
\begin{tabbing}
{\tt\ 1}\ \= {\tt Input parameter:} $x$, $N$, $p$\\
{\tt\ 2}\> {\tt Initialize precision} $m\leftarrow 1$\\
{\tt\ 3}\> {\tt do} \= \ \\
{\tt\ 4}\> \> {\tt Initialize value and error} $\ivl{x}\leftarrow rd(x,m)$\\
{\tt\ 5}\> \> {\tt for} \= $n=0$ to $N$ {\tt do}\\
{\tt\ 6}\> \> \> {\tt If} \= $prec(\ivl{x},p)={\rm\bf true}$ {\tt then}\\
{\tt\ 7}\> \> \> \> {\tt If not printed print } $n$, $\ivl{x}.fl$, $\ivl{x}.err$ \\
{\tt\ 8}\> \> \> {\tt else break}\\
{\tt\ 9}\> \> \> $\ivl{x}\leftarrow\ivl{f}(\ivl{x})$\\
{\tt 10}\> \> {\tt end for}\\
{\tt 11}\> \> $m\leftarrow m + 1$\\
{\tt 12}\> {\tt while} $prec(\ivl{x},p)={\rm\bf false}$
\end{tabbing}
where $\ivl{f}$ is an approximation-continuous approximation function for $f$
specified below. To initialize $\ivl{x}$, a rounding function
$rd:\IQ\times\IN\to\hat{\IR}^2$ is needed where $rd(x,m).fl$
is a floating-point number of precision $m$ being the exactly rounded
value of $x$ for some rounding convention, in the following nearest. Clearly,
the value $rd(x,m).err$ is an upper bound on the absolute rounding error,
$rd(x,m).err=\frac{1}{2}ulp(x)$ if the rounding mode is nearest.
The predicate $prec:\hat{\IR}^2\times\IZ\to\{{\rm\bf true}, {\rm\bf false}\}$
is a test whether the relative error of $\ivl{x}.fl$ is bounded by $10^{-p}$.
The semantics reads:
\begin{equation}
\label{pred_sem}
\begin{split}
&\text{If } \ivl{x}\in\hat{\IR}^2 \text{ approximates } x\in\IR \text{ and }
prec(\ivl{x},p) = {\rm\bf true} \text{ holds,}\\
&\text{then } |\ivl{x}.fl - x|\leq 10^{-p}|x| \text{ follows.}
\end{split}
\end{equation}

While the object oriented notation is convenient for a compact and
instructive description of the algorithm, in the following analytical
analysis an abbreviation for this notation is sometimes more handsome. As
in the line of the preceding section, floating-point numbers and functions
are indicated by a hat: $\hat{x}:=\ivl{x}.fl$ and $\hat{f}:=\ivl{f}.fl$.
An over-bar indicates an error bound: $\overline{e}:=\ivl{x}.err$ and
$\overline{erf}:=\ivl{f}.erf$. Hence, $\ivl{x}$ is equivalent to
$(\hat{x},\overline{e})$ and $\ivl{f}$ is equivalent to
$(\hat{f},\overline{erf})$.

Finally a remark on optimization. The algorithm
is not optimized in performance. Including performance issues, in
Line {\tt 11} something like $m\leftarrow a\cdot m + b$ can be used
where $a>1$ and $c\in\IN$ are constants. Here, the aim is to
find the minimal $m$ to guarantee some given upper bound on the
relative error of $x_n$.

\subsection{Computability and correctness}
\label{comp-and-corr}
It is clear that the rounding function $rd$ is computable.
So let us begin with the predicate $prec$.
\begin{proposition}
The predicate
\begin{equation}
\label{alg:prec}
prec(\ivl{x},p):=\left\{\begin{array}{ll}
{\rm\bf true} & {\rm if\ }
\ivl{x}.err\leq\frac{10^{-p}}{1+10^{-p}}|\ivl{x}.fl|\\
{\rm\bf false} & {\rm otherwise}
\end{array}\right.
\end{equation}
is computable and satisfies (\ref{pred_sem}).
\end{proposition}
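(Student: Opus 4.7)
The proof splits into two independent claims: computability of the predicate, and the semantic implication (\ref{pred_sem}). The plan is to dispatch computability first (largely bookkeeping), then prove the implication by a short triangle-inequality argument in which the peculiar factor $\frac{10^{-p}}{1+10^{-p}}$ precisely closes the loop.

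For computability, I would note that the definition only requires deciding the inequality $\ivl{x}.err \leq \frac{10^{-p}}{1+10^{-p}}|\ivl{x}.fl|$ with $\ivl{x}.err,\ivl{x}.fl\in\hat{\IR}$ dyadic and $p\in\IZ$. Clearing denominators, this is equivalent to
\[
(1+10^{-p})\,\ivl{x}.err \;\leq\; 10^{-p}\,|\ivl{x}.fl|,
\]
and, after multiplying both sides by $10^{\max(p,0)}$, to a comparison of the form $\alpha\,\ivl{x}.err \leq \beta\,|\ivl{x}.fl|$ with $\alpha,\beta\in\IN$ computable from $p$. Since a dyadic number times a nonnegative integer is again dyadic (realised directly on its significand--scale representation) and the ordering of dyadic rationals is decidable by integer arithmetic on the aligned significands, the predicate is computable. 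The sign test needed for $|\ivl{x}.fl|$ is likewise decidable on dyadic data.

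For the semantic implication, assume $\ivl{x}$ approximates $x$, i.e.\ $|\ivl{x}.fl - x|\leq \ivl{x}.err$, and $prec(\ivl{x},p)=\mathbf{true}$, i.e.\ $\ivl{x}.err\leq \frac{10^{-p}}{1+10^{-p}}|\ivl{x}.fl|$. By the reverse triangle inequality,
\[
|x| \;\geq\; |\ivl{x}.fl| - |\ivl{x}.fl - x| \;\geq\; |\ivl{x}.fl| - \ivl{x}.err \;\geq\; |\ivl{x}.fl|\Bigl(1-\tfrac{10^{-p}}{1+10^{-p}}\Bigr) \;=\; \frac{|\ivl{x}.fl|}{1+10^{-p}}.
\]
Combining this with the hypothesis gives $\ivl{x}.err \leq \frac{10^{-p}}{1+10^{-p}}|\ivl{x}.fl| \leq 10^{-p}|x|$, hence $|\ivl{x}.fl-x|\leq 10^{-p}|x|$ as required. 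The degenerate case $\ivl{x}.fl=0$ forces $\ivl{x}.err=0$ by the predicate, so $x=\ivl{x}.fl=0$ and the conclusion is trivial.

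There is no real obstacle: the only delicate point is recognising why the factor $\frac{10^{-p}}{1+10^{-p}}$ (rather than the naive $10^{-p}$) appears. The issue is that the predicate can only be tested against the computed value $\ivl{x}.fl$, but the conclusion must bound the error relative to the unknown true value $|x|$. Shrinking the test threshold by a factor $1/(1+10^{-p})$ is exactly what is needed to absorb the worst-case gap $\ivl{x}.err$ between $|x|$ and $|\ivl{x}.fl|$ and still yield a relative error of $10^{-p}$ against $|x|$.
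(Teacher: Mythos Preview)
Your proof is correct and follows essentially the same route as the paper's. The semantic implication is proved by the same triangle-inequality manoeuvre (the paper rewrites the hypothesis as $\ivl{x}.err\leq 10^{-p}(|\ivl{x}.fl|-\ivl{x}.err)$ and then uses $|\ivl{x}.fl|-\ivl{x}.err\leq|x|$, which is exactly your reverse triangle inequality step in slightly different algebraic clothing), and your computability paragraph is simply a more explicit version of what the paper dispatches in one sentence about ``basic arithmetic and finite tests''.
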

\begin{proof}
Let $\ivl{x}$ be an approximation of $x$. If
$\ivl{x}.err\leq\frac{10^{-p}}{1+10^{-p}}|\ivl{x}.fl|$
holds, then
$\ivl{x}.err\leq 10^{-p}(|\ivl{x}.fl| - \ivl{x}.err)$
follows. Using
$|\ivl{x}.fl|\leq |\ivl{x}.fl - x|+|x| \leq\ivl{x}.err+|x|$,
$|\ivl{x}.fl - x|\leq\ivl{x}.err\leq 
10^{-p}(|\ivl{x}.fl| - \ivl{x}.err)\leq 10^{-p}|x|$
follows.

The predicate (\ref{alg:prec}) only uses the approximation $\ivl{x}$,
basic arithmetic and finite tests. Hence, this formula is
computable.
\end{proof}
Note that the definition of the predicate also gives ${\rm\bf true}$
in the singular case where $\ivl{x}.fl=0$ and $\ivl{x}.err=0$ and
hence $x=0$.

An algorithm for computing $\ivl{f}.fl$ is possible by assumption.
To derive an algorithm for computing $\ivl{f}.erf$ on the absolute
error, return to Equations (\ref{main_err}) and (\ref{calc_err}).
\begin{proposition}
\label{prop:ubound}
Let $x\in D$ be given and $\ivl{x}$ an approximation of $x$ with
$\ivl{x}.fl\in D$. Assume that $\ivl{f}.fl(\ivl{x}.fl)$ computes
the value $f(\ivl{x}.fl)$ up to a correctly rounded last bit in
the significand.\footnote{This assumption is pragmatic. The already
mentioned software package MPFR implements this specification.
The problem of achieving this task for transcendental functions
may be of unknown cost and is known as The Table Maker's Dilemma, see
\url{http://perso.ens-lyon.fr/jean-michel.muller/Intro-to-TMD.htm}.
Additionally note that this assumption can be weakened without
abandoning the main statements of this work.}
Furthermore assume $\ivl{f}.fl(\ivl{x}.fl).t=\ivl{x}.fl.t$.
Then the absolute error of $\ivl{f}(\ivl{x})$ is
bounded from above by
\begin{equation}
L(\ivl{x})\cdot\ivl{x}.err + 2^{-\ivl{x}.fl.t}\cdot|\ivl{f}.fl(\ivl{x}.fl)|.
\end{equation}
Here, $L(\ivl{x})=\sup(|f'([\ivl{x}.fl-\ivl{x}.err,\ivl{x}.fl+\ivl{x}.err]
\cap D)|)$.
\end{proposition}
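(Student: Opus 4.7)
The plan is to decompose the total error of $\ivl{f}(\ivl{x})$ into a propagation part and a newly introduced rounding part, exactly as in Equation~(\ref{main_err}) of the preceding section. Concretely, I would insert a constructive zero and apply the triangle inequality:
\begin{equation*}
|\ivl{f}.fl(\ivl{x}.fl) - f(x)| \leq |\ivl{f}.fl(\ivl{x}.fl) - f(\ivl{x}.fl)| + |f(\ivl{x}.fl) - f(x)|.
\end{equation*}
The two summands will be bounded separately, and adding the bounds will yield the claim.

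For the propagation term $|f(\ivl{x}.fl) - f(x)|$, I would invoke the mean value theorem. Since $f$ is assumed continuously differentiable on $D$ and both $x\in D$ and $\ivl{x}.fl\in D$ lie in the interval~$D$, the intermediate point $y$ supplied by the theorem also lies in $D$. Moreover, because $|y-\ivl{x}.fl|\leq|x-\ivl{x}.fl|\leq \ivl{x}.err$, we have $y\in[\ivl{x}.fl-\ivl{x}.err,\ivl{x}.fl+\ivl{x}.err]\cap D$, so $|f'(y)|\leq L(\ivl{x})$ by definition of $L$. Hence $|f(\ivl{x}.fl)-f(x)|\leq L(\ivl{x})\cdot \ivl{x}.err$.

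For the rounding term $|\ivl{f}.fl(\ivl{x}.fl) - f(\ivl{x}.fl)|$, I would use the assumption that $\ivl{f}.fl(\ivl{x}.fl)$ is the correctly rounded value of $f(\ivl{x}.fl)$ at precision $t:=\ivl{x}.fl.t = \ivl{f}.fl(\ivl{x}.fl).t$. Writing $\hat{y}:=\ivl{f}.fl(\ivl{x}.fl)=s\cdot 2^{e-t}$ in normalized form with $|s|\geq 2^{t-1}$ (if $\hat y\neq 0$), the unit in the last place satisfies $\mathrm{ulp}(\hat{y})=2^{e-t}\leq 2\cdot 2^{-t}|\hat{y}|$. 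Under any round-to-nearest (or, more generally, a last-bit correct) convention, the rounding error is bounded by half a ulp, so
\begin{equation*}
|\ivl{f}.fl(\ivl{x}.fl)-f(\ivl{x}.fl)|\leq \tfrac{1}{2}\,\mathrm{ulp}(\hat{y})\leq 2^{-t}|\ivl{f}.fl(\ivl{x}.fl)|,
\end{equation*}
and the degenerate case $\hat{y}=0$ is trivial since then $f(\ivl{x}.fl)$ rounds exactly to zero at precision $t$.

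Adding the two bounds gives precisely the asserted expression. I do not expect a genuine obstacle here: the only slightly subtle point is justifying that the mean value point $y$ really lies in the enclosure interval used to define $L(\ivl{x})$, which follows from $D$ being an interval together with the approximation property of~$\ivl{x}$; the rest is a bookkeeping exercise with ulp versus precision.
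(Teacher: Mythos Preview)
Your proposal is correct and follows essentially the same route as the paper: the paper also splits the error via the triangle inequality into a propagation term bounded by the mean value theorem and a rounding term bounded by the correct-rounding assumption (citing Higham's Theorem~2.3 for the latter where you spell out the ulp argument directly). Your added care about the mean value point lying in $[\ivl{x}.fl-\ivl{x}.err,\ivl{x}.fl+\ivl{x}.err]\cap D$ is a detail the paper leaves implicit.
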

\begin{proof}
Equation (\ref{main_err}) gives $|\hat{f}(\hat{x})-f(x)|\leq
|f(\hat{x})-f(x)|+|\hat{f}(\hat{x})-f(\hat{x})|$. Using the mean
value theorem, $|f(\hat{x})-f(x)|\leq
\sup(|f'([\hat{x}-\overline{e},\hat{x}+\overline{e}])|)\overline{e}$
follows. According to the assumption on $\hat{f}$ and Theorem 2.3
of \cite{hi02},
$|\hat{f}(\hat{x})-f(\hat{x})|\leq 2^{-m}|\hat{f}(\hat{x})|$
holds where $m$ is the precision of $\hat{x}$.
\end{proof}
\begin{corollary}
\label{cor:encf}
Let $f$ be as specified in the beginning of this section,
$\ivl{f}.fl$ and $L(\ivl{x})$ specified as in Proposition
\ref{prop:ubound}. Then there exists a function
$\overline{L}(\ivl{x})$ with
$L(\ivl{x})\leq\overline{L}(\ivl{x})\leq \overline{L}_{max}$ for
some $\overline{L}_{max}\geq 0$ such that $\ivl{f}$ with
$\ivl{f}.erf(\ivl{x})=\overline{L}(\ivl{x})\cdot\ivl{x}.err+
2^{-\ivl{x}.fl.t}\cdot|\ivl{f}.fl(\ivl{x}.fl)|$ is an
approximation-continuous, computable approximation
function of $f$.
\begin{equation*}
\end{equation*}
\end{corollary}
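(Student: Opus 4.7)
The plan is to verify three things in turn: that $\ivl{f}.erf$ does produce a valid upper bound on the absolute error (so $\ivl{f}$ is an approximation function), that the construction is computable, and finally that the resulting approximation function is approximation-continuous. The three conditions pull in slightly different directions, which is why the overall bound $\overline{L}_{max}$ is needed.

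First I would construct $\overline{L}$ explicitly. By the standing assumptions $f''$ is bounded on $D$, so pick any constant $M\geq\sup_{x\in D}|f''(x)|$. The mean value theorem applied to $f'$ gives $|f'(y)|\leq |f'(\ivl{x}.fl)|+M\cdot\ivl{x}.err$ for every $y\in[\ivl{x}.fl-\ivl{x}.err,\ivl{x}.fl+\ivl{x}.err]\cap D$, hence $L(\ivl{x})\leq |f'(\ivl{x}.fl)|+M\cdot\ivl{x}.err$. Since $f'$ is computable by assumption, we can evaluate it through an approximation-continuous approximation function $\ivl{f'}$ of $f'$; define
\begin{equation*}
\overline{L}(\ivl{x}) := |\ivl{f'}.fl(\ivl{x}.fl)| + \ivl{f'}.erf(\ivl{x}) + M\cdot\ivl{x}.err.
\end{equation*}
Then $\overline{L}(\ivl{x})\geq L(\ivl{x})$ and $\overline{L}$ is bounded above by $\overline{L}_{max}:=\max_{x\in D}|f'(x)|+\varepsilon+M\cdot\mathrm{diam}(D)$ for some fixed $\varepsilon>0$ absorbing the rounding in $\ivl{f'}$; this is finite because $D$ is compact and $f'$ continuous.

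With $\overline{L}$ in hand, the approximation property follows directly from Proposition~\ref{prop:ubound}: for any approximation $\ivl{x}$ of $x\in D$,
\begin{equation*}
|\ivl{f}.fl(\ivl{x}.fl)-f(x)|\leq L(\ivl{x})\cdot\ivl{x}.err+2^{-\ivl{x}.fl.t}\cdot|\ivl{f}.fl(\ivl{x}.fl)|\leq \ivl{f}.erf(\ivl{x}).
\end{equation*}
Computability of $\ivl{f}.erf$ reduces to computability of its constituents: $\ivl{f}.fl$ is computable by the standing assumption on the self-rounding evaluation of $f$, $\ivl{f'}.fl$ and $\ivl{f'}.erf$ are computable since $f'$ is, $M$ is a fixed rational constant, and $2^{-\ivl{x}.fl.t}$ together with the absolute value and the remaining arithmetic are clearly computable on $\hat{\IR}$.

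The last and subtlest step is approximation-continuity. Given a floating-point name $(\ivl{x}_n)_n$ of $x\in D$, I need to show $(\ivl{f}(\ivl{x}_n))_n$ is a floating-point name of $f(x)$. For the value part, $\ivl{f}.fl(\ivl{x}_n.fl)$ agrees with $f(\ivl{x}_n.fl)$ up to one ulp of a variable at precision $\ivl{x}_n.fl.t\to\infty$; combined with the continuity of $f$ and $\ivl{x}_n.fl\to x$, this gives convergence to $f(x)$, while the precision assertion $\ivl{f}.fl(\ivl{x}_n.fl).t=\ivl{x}_n.fl.t\to\infty$ is built into the construction. For the error part, split $\ivl{f}.erf(\ivl{x}_n)$ into its two summands: $\overline{L}(\ivl{x}_n)\cdot\ivl{x}_n.err\to 0$ because $\overline{L}$ is bounded above by $\overline{L}_{max}$ and $\ivl{x}_n.err\to 0$, while $2^{-\ivl{x}_n.fl.t}\cdot|\ivl{f}.fl(\ivl{x}_n.fl)|\to 0$ because the prefactor tends to zero and the second factor stays bounded (by $\max_{y\in D}|f(y)|$ up to ulp rounding).

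The main obstacle I expect is the uniform boundedness requirement $\overline{L}\leq\overline{L}_{max}$: a naive choice such as $\overline{L}(\ivl{x})=L(\ivl{x})$ is not obviously computable, and an interval evaluation of $f'$ might blow up because of the very dependency phenomenon analysed in Section~2. Bypassing this via the mean-value trick $|f'(y)|\leq |f'(\hat{x})|+M\cdot\overline{e}$ together with a pointwise computable approximation of $f'$ is what makes both computability and the uniform bound work simultaneously, and this is where the assumption that $f''$ is bounded is genuinely used.
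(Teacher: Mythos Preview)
Your proof is correct and follows the same three-step skeleton as the paper: invoke Proposition~\ref{prop:ubound} for the approximation property, check computability of the ingredients, and obtain approximation-continuity from the boundedness of $\overline{L}$ and of $|\ivl{f}.fl|$ together with $\ivl{x}_n.err\to 0$ and $\ivl{x}_n.fl.t\to\infty$. The only substantive difference is in how $\overline{L}$ is produced. The paper simply asserts that a computable, bounded upper bound on $L(\ivl{x})$ exists, pointing to ``global optimization, for example by using interval arithmetic,'' and does not use the $f''$ hypothesis here at all. You instead give the explicit formula $\overline{L}(\ivl{x})=|\ivl{f'}.fl(\ivl{x}.fl)|+\ivl{f'}.erf(\ivl{x})+M\cdot\ivl{x}.err$ via the mean value theorem applied to $f'$, which makes concrete use of the boundedness of $f''$ and of the computability of $f'$. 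Your construction is more explicit and sidesteps the dependency issue you flag; the paper's is shorter but less constructive. One small caveat: your claimed uniform bound $\overline{L}_{max}=\max_D|f'|+\varepsilon+M\cdot\mathrm{diam}(D)$ tacitly assumes $\ivl{x}.err\le\mathrm{diam}(D)$ and that $\ivl{f'}.erf$ stays bounded along the relevant inputs; this is easily arranged (e.g.\ by capping $\overline{L}$ at any fixed computable upper bound for $\sup_D|f'|$), and the paper's own justification of $\overline{L}\le\overline{L}_{max}$ is no more careful on this point.
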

\begin{proof}
Let $\overline{L}(\ivl{x})$ be some computable upper bound of
$L(\ivl{x})$. $\overline{L}(\ivl{x})$ can be computed by
global optimization, for example by using interval arithmetic.
Since $f'$ is continuous and $D$ compact, $L(\ivl{x})$ is
bounded. So, $\overline{L}(\ivl{x})\leq \overline{L}_{max}$ for some
$\overline{L}_{max}\geq 0$. Also, $\ivl{f}$ is computable. Using
Proposition \ref{prop:ubound}, it follows that $\ivl{f}$ is
also an approximation function of $f$. Remains to show that
$\ivl{f}$ is approximation-continuous. Let $(\ivl{x}_n)_n$ be
some floating-point name of $x\in D$. Clearly
$\lim_{n\to\infty}\ivl{f}.fl(\ivl{x}_n.fl).t=\infty$ holds.
Since $\lim_{n\to\infty}\ivl{x}_n.err=0$ and the sequences
$(\overline{L}(\ivl{x}_n))_n$ and $(|\ivl{f}.fl(\ivl{x}_n.fl)|)_n$
are bounded, $\lim_{n\to\infty}\ivl{f}.erf(\ivl{x}_n)=0$
follows. Furthermore, by this result and the statement of
Proposition \ref{prop:ubound},
also $\lim_{n\to\infty}\ivl{f}.fl(\ivl{x}_n)=x$ holds.
\end{proof}

To summarize, the iteration (\ref{main_it}) is performed
in the algorithm by iterating a value $\hat{x}_n$ approximating $x_n$
with an upper bound on its absolute error $\overline{e}_n$ according to
\begin{align}
\label{comp_rec1}
\hat{x}_{n+1} & =\hat{f}(\hat{x}_n) & \hat{x}_0 & =rd(x,m)\\
\label{comp_rec2}
\overline{e}_{n+1} & =\overline{L}(\hat{x}_n,\overline{e}_n)\overline{e}_n +
2^{-m}|\hat{x}_{n+1}| &
 \overline{e}_0 & =2^{-m}|\hat{x}_0|
\end{align}
where $\overline{L}(\hat{x}_n,\overline{e}_n)$ is a computable
upper bound on $L(\hat{x}_n,\overline{e}_n)$ as described in the
preceding corollary and $m$ the precision of any floating-point
number involved at that stage.
This is Line {\tt 9} in the inner {\tt for}-loop of the algorithm
which is executed with successively increasing precision $m$,
controlled by the outer {\tt do-while}-loop. Finally, it has to
be shown that this outer loop eventually terminates.
\begin{proposition}
Let $x\in D$ with $x\neq 0$ be given and $(\ivl{x}_m)_{m\geq 1}$ a
floating-point name of $x$ obeying  $\ivl{x}_m.fl.t = m$.
Then $\lim_{m\to\infty}prec(\ivl{x}_m,p)={\bf true}$ follows
for all $p\in\IZ$.
\end{proposition}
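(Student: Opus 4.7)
The plan is to unwind the definition of \emph{prec} from the preceding proposition and show that, for $m$ large enough, the inequality $\ivl{x}_m.err\le\frac{10^{-p}}{1+10^{-p}}|\ivl{x}_m.fl|$ is forced to hold. The hypothesis $x\neq 0$ is the crucial ingredient that prevents the right-hand side from collapsing to zero; the constraint $\ivl{x}_m.fl.t=m$ is actually irrelevant to the proof and is just part of the algorithm's bookkeeping.

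First I would fix an arbitrary $p\in\IZ$ and set $\varepsilon_1:=|x|/2>0$ (using $x\neq 0$). Since the floating-point name satisfies $\lim_{m\to\infty}\ivl{x}_m.fl=x$, there is an index $M_1$ such that $|\ivl{x}_m.fl-x|<\varepsilon_1$ for all $m\ge M_1$. The reverse triangle inequality then yields $|\ivl{x}_m.fl|>|x|/2$ for all $m\ge M_1$, so the magnitude of the approximation stays uniformly bounded away from $0$.

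Next, set $\varepsilon_2:=\frac{10^{-p}}{1+10^{-p}}\cdot\frac{|x|}{2}>0$. Using $\lim_{m\to\infty}\ivl{x}_m.err=0$ from the definition of a floating-point name, there exists $M_2$ with $\ivl{x}_m.err<\varepsilon_2$ for all $m\ge M_2$. Combining the two bounds, for every $m\ge\max(M_1,M_2)$ we get
\begin{equation*}
\ivl{x}_m.err<\tfrac{10^{-p}}{1+10^{-p}}\cdot\tfrac{|x|}{2}<\tfrac{10^{-p}}{1+10^{-p}}\,|\ivl{x}_m.fl|,
\end{equation*}
which by (\ref{alg:prec}) means $prec(\ivl{x}_m,p)={\rm\bf true}$.

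There is no real obstacle here; the argument is a straightforward $\varepsilon$-chase. The only delicate point worth flagging is that the hypothesis $x\neq 0$ cannot be dropped: if $x=0$, then $|\ivl{x}_m.fl|$ may tend to $0$ at the same (or faster) rate as $\ivl{x}_m.err$, and no relative-error bound of the form $\ivl{x}_m.err\le C|\ivl{x}_m.fl|$ need ever hold. This is precisely why the proposition assumes $x\neq 0$, and it also motivates the discussion at the end of Subsection~\ref{comp-and-corr} about using relative rather than absolute error.
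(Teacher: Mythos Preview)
Your proof is correct and follows essentially the same approach as the paper's: both use $x\neq 0$ together with $\ivl{x}_m.fl\to x$ to get $|\ivl{x}_m.fl|\ge |x|/2$ eventually, and then use $\ivl{x}_m.err\to 0$ to force $\ivl{x}_m.err\le\frac{10^{-p}}{2(1+10^{-p})}|x|$ eventually, whence the defining inequality of $prec$ holds. Your observation that the hypothesis $\ivl{x}_m.fl.t=m$ plays no role is also accurate.
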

\begin{proof}
Since $x\neq 0$ and $\lim_{m\to\infty}\ivl{x}_m.err=0$,
there exists some $M\in\IN$ such that for all $m\geq M$,
$\frac{1}{2}|x|\leq|\ivl{x}_m.fl|$ and
$\ivl{x}_m.err\leq\frac{10^{-p}}{2(1+10^{-p})}|x|$ holds for
all $m\geq M$. Then $prec(\ivl{x}_m,p)={\bf true}$ for all
$m\geq M$.
\end{proof}
The next proposition  makes the link to Line {\tt 9}
in the algorithm.
\begin{proposition}
\label{prop:alg_halt}
Let $x_n$ be the $n$-th element of the orbit of the recursion
(\ref{main_it}) and $((\ivl{x}_n)_m)_{m\geq 1}$ a sequence given
according to  the recursion equations (\ref{comp_rec1}) and
(\ref{comp_rec2}) with increasing precision
$(\ivl{x}_n)_m.fl.t = m$. Then $((\ivl{x}_n)_m)_{m\geq 1}$ is
a floating-point name of $x_n$.
\end{proposition}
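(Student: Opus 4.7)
The plan is to proceed by induction on $n$, with the induction hypothesis that $((\ivl{x}_n)_m)_{m\geq 1}$ is a floating-point name of $x_n$, i.e.\ each element approximates $x_n$, and the sequences of midpoints, errors and precisions converge to $x_n$, $0$ and $\infty$ respectively.

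For the base case $n=0$, I would appeal directly to the defining properties of the rounding operation $rd$. Since $(\ivl{x}_0)_m = rd(x,m)$, one has $|(\ivl{x}_0)_m.fl - x|\leq (\ivl{x}_0)_m.err = \tfrac{1}{2}\mathrm{ulp}(rd(x,m).fl)$, the precision equals $m$ by construction, and both the midpoint converges to $x$ and the error tends to zero as $m\to\infty$. Thus $((\ivl{x}_0)_m)_{m\geq 1}$ is a floating-point name of $x_0 = x$.

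For the inductive step, the key point is that the recursion (\ref{comp_rec1})--(\ref{comp_rec2}) is precisely the evaluation of the approximation function $\ivl{f}$ provided by Corollary \ref{cor:encf}: indeed, with $(\ivl{x}_n)_m.fl.t = m$, the update $\overline{e}_{n+1} = \overline{L}((\ivl{x}_n)_m)\,\overline{e}_n + 2^{-m}|\hat{x}_{n+1}|$ matches the formula $\ivl{f}.erf(\ivl{x}) = \overline{L}(\ivl{x})\cdot\ivl{x}.err + 2^{-\ivl{x}.fl.t}\cdot|\ivl{f}.fl(\ivl{x}.fl)|$ stated there. Hence $(\ivl{x}_{n+1})_m = \ivl{f}((\ivl{x}_n)_m)$. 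By the induction hypothesis $((\ivl{x}_n)_m)_{m\geq 1}$ is a floating-point name of $x_n$, and since $\ivl{f}$ is approximation-continuous (Corollary \ref{cor:encf}), the image sequence $(\ivl{f}((\ivl{x}_n)_m))_{m\geq 1}$ is a floating-point name of $f(x_n) = x_{n+1}$, closing the induction.

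The routine but necessary check, and really the only place requiring care, is to verify that the precision bookkeeping aligns: one must confirm that at every step the precision of the computed midpoint is indeed $m$, so that the $2^{-m}$ factor in the recursion coincides with the $2^{-\ivl{x}.fl.t}$ factor in the definition of $\ivl{f}.erf$. This follows from the assumption $\ivl{f}.fl(\ivl{x}.fl).t = \ivl{x}.fl.t$ in Proposition \ref{prop:ubound}, propagated inductively along the orbit. With this alignment fixed, no additional work is needed beyond invoking the approximation-continuity established in Corollary \ref{cor:encf}; the main conceptual obstacle has really already been handled by that corollary, and the present statement is its iterated consequence.
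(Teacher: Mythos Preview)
Your proof is correct but takes a different route from the paper. You argue by induction on $n$, using that the recursion $(\ref{comp_rec1})$--$(\ref{comp_rec2})$ is exactly one application of the approximation function $\ivl{f}$ from Corollary~\ref{cor:encf}, and then invoke approximation-continuity to pass from a floating-point name of $x_n$ to one of $x_{n+1}$. The paper instead gives a direct quantitative estimate: it bounds $\overline{L}(\hat{x}_n,\overline{e}_n)\leq\overline{L}_{max}$ and $|\hat{x}_{n+1}|\leq\overline{M}$ uniformly, turns $(\ref{comp_rec2})$ into the linear majorant $\overline{e}_{n+1}\leq\overline{L}_{max}\overline{e}_n+2^{-m}\overline{M}$, and iterates to obtain $\overline{e}_n\leq 2^{-m}\overline{M}\sum_{k=0}^{n}\overline{L}_{max}^k$, from which $\overline{e}_n\to 0$ as $m\to\infty$ for fixed $n$ is immediate.

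Your structural argument is cleaner for this proposition in isolation, since the work has indeed already been done in Corollary~\ref{cor:encf}. The paper's explicit bound, on the other hand, is not wasted effort: essentially the same geometric-sum estimate reappears in the proof that $m_{min}(x,N,p)$ grows at most linearly in $N$, so the quantitative form is what feeds into the later complexity analysis.
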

\begin{proof}
Let  $\overline{L}_{max}$ according to Corollary \ref{cor:encf}
and $\overline{M}\geq\sup\{|x| : x\in D\}$ such that
$|\hat{x}_n|\leq\overline{M}$ holds for all $n$.
Then Equation (\ref{comp_rec2}) leads to 
$\overline{e}_{n+1}\leq \overline{L}_{max}\overline{e}_n +
2^{-m}\overline{M}$. Iteration gives
$\overline{e}_n\leq\overline{L}^n_{max}\overline{e}_0+
2^{-m}\overline{M}\sum^{n-1}_{k=0}\overline{L}_{max}^k \leq
2^{-m}\overline{M} \sum^n_{k=0}\overline{L}_{max}^k$.
Hence, for $n$ fixed, $\lim_{m\to\infty}(\ivl{x}_n)_m.err=0$
follows and consequently also
$\lim_{m\to\infty}(\ivl{x}_n)_m.fl=x_n$.
\end{proof}
These two propositions finish the correctness proof of the
algorithm. They show that, if $x_n\neq 0$ for $n=0,\dots,N$,
the outer loop eventually terminates for any $p\in\IZ$.

The drawback of the algorithm is, that in the case $x_n=0$
for some $n\leq N$, the computation does not terminate. This
is only due to the fact that the relative error controls
the outer {\tt do-while}-loop. If the absolute error would be
used instead, this drawback is eliminated. However, controlling
the relative error is more general. Consider for example a
dynamics with positive phase space, the concentration of a
substance for example. If the value varies in time over a
wide range in scale, it is fortunate to illustrate the
orbit in a logarithmic plot. If the relative error
is controlled, the error bars in the plot are constant,
in contrast to large varying error bars in the case where
absolute errors are used.

Absolute errors are in the line with Computable Analysis.
Replacing the test $prec(\ivl{x},p)$ by the test on
$\ivl{x}.err \leq 10^{-p}$ in the algorithm would give
a segment $(x_n)_{0\leq n\leq N}$ of the orbit with
accuracy $|\hat{x}_n - x_n|\leq 10^{-p}$. It is now
straightforward to see that the function $g:D\times\IN\to D$
with $g(x,n):=f^n(x)$ is computable. Here, a function
$g:D\times\IN\to D$ is computable if there exists a
computable approximation function
$\ivl{g}:\hat{\IR}^2\times\IN\to\hat{\IR}^2$ for
$g$ which is approximation-continuous with respect to the
first argument.

\subsection{Computational complexity}
After having presented the preliminary work, the main issue of the
paper is addressed - the computational complexity of the presented
algorithm. The complexity measure of interest here is the loss of
significance rate already introduced informally in the previous section.
Here is the formal definition.
\begin{definition}
\label{def:losr}
The minimal precision, for which the described algorithm
eventually halts is denoted by $m_{min}(x,N,p)$, where $x$, $N$
and $p$ are the corresponding input parameters. The growth
rate of $m_{min}$ is given by
\begin{equation}
\label{main_gr}
\sigma(x,p):=\limsup_{N\to\infty}\frac{m_{min}(x,N,p)}{N}.
\end{equation}
Then, the {\em loss of significance rate}
$\sigma:\IQ\cap D\to\IR$ is defined by
\begin{equation}
\label{main_lsr}
\sigma(x):=\lim_{p\to\infty}\sigma(x,p).
\end{equation}
\end{definition}
To achieve bounds on the loss of significance rate,
the drawback of the preceding subsection also makes problems
here. If $x_n=0$ for some $n\in\IN$, the loss of significance
rate may be unbounded. Therefore, one more assumption in addition
to the ones on the dynamical system stated in the beginning of this
section has to be made.
\begin{assumption}
\label{ass:main}
The dynamical system $(D,f)$ is assumed to have the properties
already mentioned in the beginning of this section and
furthermore, for any orbit $(x_n)_n$ under consideration,
$x_n\neq 0$ holds for any $n\in\IN$ as well as
\begin{equation*}
\lim_{N\to\infty}\frac{\ld(\min\{|x_n| : n=0,1,\dots,N\})}{N}=0.
\end{equation*}
\end{assumption}
If only a finite range in scale is relevant, the additional
assumption is no loss of generality. An example is the logistic
equation where $0\in D$ but $0$ has no distinguished role.
Instead of considering $(D,f)$, consider the following dynamical
system $(\tilde{D},\tilde{f})$. Choose some $M>-\min(D)$ and
set $\tilde{D}:=\{x+M \mid x\in D\}$ as well as
$\tilde{f}(x):=f(x-M)+M$ for all $x\in\tilde{D}$. Then
$(\tilde{D},\tilde{f})$ fulfills the additional assumption.
Furthermore $\tilde{f}'(x)=f'(x-M)$ holds and therefore there
is no substantial difference in the complexity analysis of the
algorithm between the original system and the modified system.

First, the boundedness of $\sigma(x)$ is shown.
\begin{proposition}
Let $(D,f)$ be as in Assumption \ref{ass:main} and $m_{min}(x,N,p)$
as in Definition \ref{def:losr}. Then, for given $p\in\IZ$, there exists
a constant $C\geq 0$, depending on $f$, such that
$m_{min}(x,N,p)\leq C\cdot N + o(N)$ holds for all
$N\in\IN$, $x\in\IQ\cap D$.
\end{proposition}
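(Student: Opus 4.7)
The plan is to exhibit an explicit sufficient precision $m$ that guarantees termination of the outer loop at iteration depth $N$ and to show that this $m$ has the desired asymptotic form. We take $C := \max\{0,\ld(\overline{L}_{max})\}$, where $\overline{L}_{max}$ is the uniform bound on $\overline{L}$ supplied by Corollary \ref{cor:encf}.

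First I would iterate the recursion (\ref{comp_rec2}) using the crude bounds $\overline{L}(\hat{x}_n,\overline{e}_n)\leq \overline{L}_{max}$ and $|\hat{x}_n|\leq \overline{M}$ (with $\overline{M}\geq\sup\{|y|:y\in D\}$, exactly as in Proposition \ref{prop:alg_halt}). This produces the uniform estimate
\begin{equation*}
\overline{e}_n \;\leq\; 2^{-m}\,\overline{M}\sum_{k=0}^{n}\overline{L}_{max}^{\,k},
\end{equation*}
whose right hand side grows at most like $2^{-m}\,\overline{M}\,\overline{L}_{max}^{\,N+1}/|\overline{L}_{max}-1|$ when $\overline{L}_{max}\neq 1$, and only linearly (or remains bounded) in the degenerate cases.

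Next I would translate the termination predicate (\ref{alg:prec}) into a condition on $m$. Since $|\hat{x}_n|\geq|x_n|-\overline{e}_n$, imposing the half-size requirement $\overline{e}_n\leq\tfrac12|x_n|$ yields $|\hat{x}_n|\geq\tfrac12|x_n|$ and reduces the predicate to the sufficient inequality $\overline{e}_n\leq\tfrac{10^{-p}}{2(1+10^{-p})}|x_n|$. Combining this with the previous bound, a single sufficient condition guaranteeing $prec(\ivl{x}_n,p)=\mathbf{true}$ for every $n=0,\ldots,N$ is
\begin{equation*}
2^{-m}\,\overline{M}\sum_{k=0}^{N}\overline{L}_{max}^{\,k}
\;\leq\;
\tfrac{10^{-p}}{2(1+10^{-p})}\;\min_{0\leq n\leq N}|x_n|.
\end{equation*}
Taking $\ld$ of both sides and solving for $m$, this condition is satisfied whenever
\begin{equation*}
m \;\geq\; \ld\!\Bigl(\sum_{k=0}^{N}\overline{L}_{max}^{\,k}\Bigr) \;-\; \ld\bigl(\min_{n\leq N}|x_n|\bigr) \;+\; p\,\ld(10) \;+\; C_1,
\end{equation*}
for a constant $C_1$ depending only on $f$ (through $\overline{M}$ and arithmetic constants).

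Finally, I would read off the asymptotics. The first summand is $\leq (N+1)\ld(\overline{L}_{max})+O(1)$ when $\overline{L}_{max}>1$, and $O(\log N)$ or $O(1)$ otherwise; in all cases it is bounded by $C\cdot N + o(N)$ with $C=\max\{0,\ld(\overline{L}_{max})\}$. The second summand is $o(N)$ by Assumption \ref{ass:main}. The remaining terms $p\,\ld(10)+C_1$ are independent of $N$ and therefore absorbed into the $o(N)$ remainder. Consequently $m_{min}(x,N,p)\leq C\cdot N + o(N)$ as claimed. The one subtle point, and the main obstacle, is justifying the half-size step $|\hat{x}_n|\geq\tfrac12|x_n|$ simultaneously for all $n\leq N$: this is where Assumption \ref{ass:main} is essential, because the minimum $\min_{n\leq N}|x_n|$ decays only sub-exponentially, so for $m$ satisfying the displayed lower bound the uniform bound on $\overline{e}_n$ is indeed dominated by $\tfrac12\min_{n\leq N}|x_n|$, closing the argument.
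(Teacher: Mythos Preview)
Your proposal is correct and follows essentially the same route as the paper: iterate the crude error recursion with the global bounds $\overline{L}_{max}$ and $\overline{M}$, convert the predicate into a sufficient inequality on $m$, and absorb the $-\ld(\min_{n\le N}|x_n|)$ term into $o(N)$ via Assumption~\ref{ass:main}. The one structural difference is that the paper works directly with $B(N)=\min_{n\le N}|\hat{x}_n|$ in the denominator, whereas you pass through the half-size step to replace $|\hat{x}_n|$ by $\tfrac12|x_n|$; your detour is arguably cleaner, since Assumption~\ref{ass:main} is stated for the true orbit $(x_n)$ rather than the computed one, so your version invokes it without any tacit identification of $\hat{x}_n$ with $x_n$.
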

\begin{proof}
According to the requirements made on $(D,f)$, there are some constants
$L>1$ and $M>0$ such that $\overline{e}_{n+1}\leq L\overline{e}_n+2^{-m} M$
holds for all $n\in\IN$ and all precisions $m$.
Analogous to the treatment in the proof of Proposition
\ref{prop:alg_halt}, iteration gives
$\overline{e}_n\leq 2^{-m}M\sum^n_{k=0}L^k=2^{-m}M\frac{L^{n+1}-1}{L-1}$.
Let $B(N):=\min\{|\hat{x}_n|:n=0,1,\dots,N\}> 0$. Then, for all $n\leq N$,
$\overline{e}_n/|\hat{x}_n|\leq\overline{e}_n/B(N)\leq
\frac{M}{(L-1)g(N)}2^{-m}L^{N+1}$ follows. If now
$\frac{M}{(L-1)B(N)}2^{-m}L^{N+1}\leq \frac{10^{-p}}{1+10^{-p}}$ holds,
$prec((\hat{x}_n,\overline{e}_n),p)={\rm\bf true}$ for all
$n=0,\dots,N$. This leads to the bound
$m_{min}(x,N,p)\leq\ld(L)\cdot N +
\max(1, \ld(\frac{LM}{L-1}) - \ld(B(N)) + p\cdot\ld(10) + \ld(1+10^{-p}))$.
\end{proof}
\begin{corollary}
Let $(D,f)$ be as in Assumption \ref{ass:main}, $\sigma(x,p)$ as in
(\ref{main_gr}) and $\sigma(x)$ the loss of significance rate. Then, for
given $p\in\IZ$, there exists some constant $C\geq 0$ such that
$\sigma(x,p)\leq\sigma(x)\leq C$ holds for all $x\in\IQ\cap D$.
\end{corollary}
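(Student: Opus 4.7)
The plan is to derive the corollary as a direct consequence of the preceding proposition together with elementary monotonicity. I would proceed as follows.

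First, I would argue monotonicity of $m_{min}$ in the output-precision parameter $p$. If the algorithm halts for input $(x,N,p')$ with precision $m$, then it certainly halts for any smaller $p\leq p'$ with the same precision $m$, because the test $prec(\ivl{x},p)$ is weaker when $p$ is smaller. Hence $m_{min}(x,N,p)\leq m_{min}(x,N,p')$ whenever $p\leq p'$. Dividing by $N$ and taking $\limsup_{N\to\infty}$, the map $p\mapsto \sigma(x,p)$ is non-decreasing, which on the one hand guarantees that the limit defining $\sigma(x)$ exists in $[0,\infty]$, and on the other hand gives the left inequality $\sigma(x,p)\leq\sigma(x)$.

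Next, I would extract from the preceding proposition a bound of the form $m_{min}(x,N,p)\leq C\cdot N + r(N,p)$, where the crucial observation is that the constant $C=\mathrm{ld}(L)$ produced in that proof depends only on the data $(D,f)$ (namely on the Lipschitz-type quantity $L$) and \emph{not} on $p$ or on $x$; the entire $p$- and $x$-dependence is confined to the additive remainder $r(N,p)$. Inspecting the remainder, the only term that is not manifestly $O(1)$ in $N$ is $-\mathrm{ld}(B(N))$, and this is exactly where Assumption~\ref{ass:main} enters: since the algorithm maintains a relative error of at most $10^{-p}$ along the computed orbit, $|\hat{x}_n|\geq (1-10^{-p})|x_n|$, so that $B(N)\geq (1-10^{-p})\min\{|x_n|:n\leq N\}$, and Assumption~\ref{ass:main} forces $\mathrm{ld}(B(N))/N\to 0$. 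Consequently $r(N,p)=o(N)$ for each fixed $p$.

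Dividing the bound by $N$ and letting $N\to\infty$ then yields $\sigma(x,p)\leq C$ for every $p\in\IZ$ and every $x\in\IQ\cap D$. Because $C$ is independent of $p$, passing to the limit $p\to\infty$ preserves the bound and gives $\sigma(x)\leq C$, completing the chain $\sigma(x,p)\leq\sigma(x)\leq C$.

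The only delicate step is the control of $-\mathrm{ld}(B(N))/N$: one must be sure that the minimum of the computed values $|\hat{x}_n|$ cannot collapse faster than exponentially with the wrong rate. This is precisely the content of Assumption~\ref{ass:main}, transferred from the true orbit $(x_n)_n$ to the approximate orbit $(\hat{x}_n)_n$ via the relative-error guarantee that the algorithm enforces at termination. Everything else is bookkeeping.
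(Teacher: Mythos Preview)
Your argument is correct and is essentially the natural way to spell out what the paper leaves implicit: the paper states this corollary without proof, relying on the preceding proposition. Your two ingredients---monotonicity of $p\mapsto m_{min}(x,N,p)$ (hence of $\sigma(x,p)$) to obtain $\sigma(x,p)\le\sigma(x)$, and the observation that the constant $C=\ld(L)$ in the proposition's bound is independent of $p$, so that $\sigma(x,p)\le C$ for all $p$ and thus $\sigma(x)\le C$---are exactly what is needed.

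One small comment on the ``delicate step'': your transfer of Assumption~\ref{ass:main} from the true orbit to the computed one via the relative-error guarantee is fine, but note that it is applied at precision $m=m_{min}$, where the guarantee indeed holds by definition of $m_{min}$; so the apparent circularity is only superficial. The paper's proposition has the same feature (its $B(N)$ is defined via the computed values $\hat{x}_n$), and your remark makes explicit why the $-\ld(B(N))$ term is $o(N)$.
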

In the following, the main statements of this paper are be formulated:
A lower and an upper bound for the loss of significance rate is given.
Furthermore, the relation of these bounds to the Lyapunov exponent
$\lambda(x)$ is shown. Before the theorem is stated, for sake of
completeness, the definition of the Lyapunov exponent and its basic
properties are presented.
\begin{definition}
\label{def:ljap}
Let $(D,f)$ be a dynamical system, $D\In\IR$ compact and $f:D\to D$
continuously differentiable on $D$. Then the
{\em Lyapunov exponent} at $x\in D$ is defined by
\begin{equation}
\label{ljap_def}
\lambda(x):=\lim_{n\to\infty}\frac{1}{n}\sum^{n-1}_{k=0}
\ln(|f'(f^k(x))|)
\end{equation}
if the limit exists.
\end{definition}
The Lyapunov exponent may depend on $x$. However, the following
properties hold:
\begin{itemize}
\item[(a)]
If $(D,f)$ has an {\em invariant measure} $\rho$, then the limit in
Equation (\ref{ljap_def}) exists $\rho$-almost everywhere.
\item[(b)]
Furthermore, if $\rho$ is {\em ergodic} then $\lambda(x)$
is $\rho$-almost everywhere constant and equal to
\begin{equation*}
\int_D \ln(|f'(x)|)\,\rho(dx).
\end{equation*}
\end{itemize}
These properties are a direct consequence of the Birkhoff ergodic theorem,
see \cite{kh95}, Theorem 4.1.2 and Corollary 4.1.9. Now the first theorem.
\begin{theorem}
Let $(D,f)$ be as in Assumption \ref{ass:main}, $\sigma(x,p)$ as in
(\ref{main_gr}) and $\lambda(x)$ the Lyapunov exponent of
$(D,f)$. Then $\sigma(x,p)\geq\max(0,\lambda(x))/\ln(2)$ holds for all
$x\in\IQ\cap D$, $p\in\IZ$ if $\lambda(x)$ exists.
\end{theorem}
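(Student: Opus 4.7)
The plan is to derive a lower bound on $m_{min}(x,N,p)$ by extracting, from the very recursion that the algorithm uses to propagate its own error bound $\overline{e}_n$, a product of $|f'|$ evaluated on the \emph{true} orbit. Since $\lambda(x)$ is defined as the Birkhoff average of $\ln|f'|$ along the true orbit, this product links $m_{min}$ directly to $N\lambda(x)/\ln 2$, with no need for any continuity argument on the perturbed orbit.

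The decisive observation is that $[\hat{x}_k-\overline{e}_k,\,\hat{x}_k+\overline{e}_k]$ is a validated enclosure of $x_k$ (by Proposition \ref{prop:ubound} and Corollary \ref{cor:encf}), so
\[
\overline{L}(\ivl{x}_k)\;\geq\;\sup\bigl\{|f'(y)|:y\in[\hat{x}_k-\overline{e}_k,\hat{x}_k+\overline{e}_k]\cap D\bigr\}\;\geq\;|f'(x_k)|.
\]
Inserting this into (\ref{comp_rec2}) and discarding the additive rounding terms at all steps except $k=1$ (which sidesteps the edge case $\overline{e}_0=0$ occurring when $x$ happens to be representable at precision $m$), iteration yields
\[
\overline{e}_N\;\geq\;2^{-m}\,|\hat{x}_1|\prod_{k=1}^{N-1}|f'(x_k)|.
\]
Since the algorithm reaches step $N$, the halting predicate forces $\overline{e}_N\leq\tfrac{10^{-p}}{1+10^{-p}}|\hat{x}_N|$. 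Taking $\ld$ and rearranging gives
\[
m_{min}(x,N,p)\;\geq\;\sum_{k=1}^{N-1}\ld|f'(x_k)|\;+\;\ld|\hat{x}_1|\;-\;\ld|\hat{x}_N|\;+\;p\cdot\ld(10)\;+\;O(1).
\]
Because $\hat{x}_n$ stays within relative error $10^{-p}$ of $x_n$, the term $-\ld|\hat{x}_N|$ is bounded above by $-\ld(\min_{n\leq N}|x_n|)+O(1)$, which is $o(N)$ by Assumption \ref{ass:main}; the term $\ld|\hat{x}_1|$ is bounded since $D$ is compact.

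Dividing by $N$ and passing to $\liminf$, the right-hand side tends to $\lambda(x)/\ln 2$ by the very definition (\ref{ljap_def}) of the Lyapunov exponent, whose existence is assumed. Since $\limsup\geq\liminf$, this gives $\sigma(x,p)\geq\lambda(x)/\ln 2$; combined with the trivial bound $\sigma(x,p)\geq 0$ (the precision $m$ is a nonnegative integer), the desired inequality $\sigma(x,p)\geq\max(0,\lambda(x))/\ln 2$ follows. I do not anticipate a substantial obstacle: the one technical step on which everything hinges is the enclosure inequality $\overline{L}(\ivl{x}_k)\geq|f'(x_k)|$, but this is immediate from the validated-computation property that already underlies the correctness of the algorithm itself.
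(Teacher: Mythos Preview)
Your approach is essentially identical to the paper's: both exploit the enclosure inequality $\overline{L}(\ivl{x}_k)\geq |f'(x_k)|$, discard the additive rounding terms in the recursion~(\ref{comp_rec2}) to obtain $\overline{e}_N\geq 2^{-m}\cdot(\text{const})\cdot\prod|f'(x_k)|$, and then convert the halting predicate into a lower bound on $m$.

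One small slip worth flagging: in the inequality $m\geq \sum\ld|f'(x_k)|+\ld|\hat{x}_1|-\ld|\hat{x}_N|+\cdots$, you need a \emph{lower} bound on $-\ld|\hat{x}_N|$ (equivalently an \emph{upper} bound on $|\hat{x}_N|$), not the upper bound you derive from Assumption~\ref{ass:main}. The paper simply uses compactness of $D$: $|\hat{x}_N|\leq M$, hence $-\ld|\hat{x}_N|\geq -\ld M$, a constant. Likewise, for $\ld|\hat{x}_1|$ you need a lower bound, which comes from $x_1\neq 0$ rather than from compactness. Both fixes are immediate and Assumption~\ref{ass:main} is not actually needed for this direction. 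Your detour through $k=1$ to avoid $\overline{e}_0=0$ is also unnecessary here, since by~(\ref{comp_rec2}) one has $\overline{e}_0=2^{-m}|\hat{x}_0|>0$ whenever $x\neq 0$.
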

\begin{proof}
Let $N\in\IN$ be given and $M > 0$ a constant with $|\hat{x}_n|\leq M$
for all $n\in\IN$. According to Equation (\ref{comp_rec2}) and Proposition
\ref{prop:ubound}, $\overline{e}_{n+1}\geq|f'(x_n)|\overline{e}_n$
holds. Iteration gives
$\overline{e}_N\geq |\hat{x}_0|2^{-m}\prod^{N-1}_{n=0}|f'(x_n)|$.
So, $\frac{\overline{e}_N}{|\hat{x}_N|}\geq
\frac{|\hat{x}_0|2^{-m}}{M}\prod^{N-1}_{n=0}|f'(x_n)|$ follows.
A necessary condition for the algorithm to terminate is therefore
$\frac{|\hat{x}_0|}{M}2^{-m}\prod^{N-1}_{n=0}|f'(x_n)|\leq
\frac{10^{-p}}{1+10^{-p}}$. This gives the bound on
$m_{min}(x,N,p)\geq\sum^{N-1}_{n=0}\ld(|f'(x_k)|)+\ld(\frac{|\hat{x}_0|}{M})+
p\cdot\ld(10)+\ld(1+10^{-p})$. Following the definitions of $\sigma(x,p)$
and the Lyapunov exponent, $\sigma(x,p)\geq\lambda(x)/\ln(2)$ follows.
\end{proof}

Before a realistic upper bound on $\sigma(x,p)$ can be presented, one more
definition is needed.
\begin{definition}
Let $\alpha>0$ then define a function $\eta_\alpha:(0,\infty)\to\IR$ by
\begin{equation*}
\eta_\alpha(x):=\left\{\begin{array}{ll}
\ln(x) & {\rm if\ } x\geq\alpha\\
\ln(\alpha) & {\rm if\ } x<\alpha
\end{array}\right..
\end{equation*}
Furthermore, for any $\alpha>0$ define
\begin{equation*}
\overline{\lambda}_\alpha(x):=\limsup_{n\to\infty}\frac{1}{n}
\sum^{n-1}_{k=0} \eta_\alpha(|f'(f^k(x))|)
\end{equation*}
\end{definition}
\begin{proposition}
For all $\alpha>0$ there exists some constant $C\geq 0$
such that $\overline{\lambda}_\alpha(x)\leq C$
holds for all $x\in D$. Furthermore, if the Lyapunov
exponent $\lambda(x)$ exists,
$\lambda(x)\leq\overline{\lambda}_\alpha(x)$ holds.
\end{proposition}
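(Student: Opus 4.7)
The plan is to treat the two assertions separately; both follow from elementary pointwise estimates on $\eta_\alpha$, combined with compactness of $D$.

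For the uniform bound, I would start by noting that since $f$ is continuously differentiable on the compact interval $D$, the function $|f'|$ attains a finite maximum $M:=\sup_{y\in D}|f'(y)|<\infty$. Then, for every $y\in D$, a case analysis based on whether $|f'(y)|\geq\alpha$ or $|f'(y)|<\alpha$ shows that
\begin{equation*}
\eta_\alpha(|f'(y)|)\leq\max(\ln M,\ln\alpha).
\end{equation*}
This bound is uniform in $y$ and hence also holds for each iterate $f^k(x)\in D$. Averaging over $k=0,\dots,n-1$ preserves the bound, and so does passing to the $\limsup$ in $n$. Taking $C:=\max(0,\ln M,\ln\alpha)\geq 0$ gives the first claim.

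For the comparison with the Lyapunov exponent, the key observation is the pointwise inequality $\ln(y)\leq\eta_\alpha(y)$ valid for every $y\geq 0$ (with the usual convention $\ln(0)=-\infty$): indeed, if $y\geq\alpha$ equality holds by definition, and if $y<\alpha$ we have $\ln(y)<\ln(\alpha)=\eta_\alpha(y)$. Applying this with $y=|f'(f^k(x))|$ and averaging yields
\begin{equation*}
\frac{1}{n}\sum_{k=0}^{n-1}\ln(|f'(f^k(x))|)\leq\frac{1}{n}\sum_{k=0}^{n-1}\eta_\alpha(|f'(f^k(x))|)
\end{equation*}
for every $n\in\IN$. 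Whenever the Lyapunov exponent $\lambda(x)$ exists, the left-hand side converges to $\lambda(x)$, while the $\limsup$ of the right-hand side is by definition $\overline{\lambda}_\alpha(x)$. Since the limit of a sequence is dominated by the $\limsup$ of any pointwise larger sequence, the desired inequality $\lambda(x)\leq\overline{\lambda}_\alpha(x)$ follows.

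There is no real obstacle here; the only subtle point is that $|f'|$ may vanish somewhere in $D$, which would make $\ln(|f'|)$ equal to $-\infty$ on part of the orbit. The truncation built into $\eta_\alpha$ is precisely what guards against this, so the averages $\frac{1}{n}\sum\eta_\alpha(|f'(f^k(x))|)$ remain finite and well-behaved, and both claims go through without further assumptions beyond those already made on $(D,f)$.
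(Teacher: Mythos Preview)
Your proof is correct and follows essentially the same route as the paper: bound each term $\eta_\alpha(|f'(\cdot)|)$ uniformly by $\ln(\max(\alpha,M))$ (the paper uses a Lipschitz constant $L$ in place of your $M=\sup|f'|$), then use the pointwise inequality $\ln\leq\eta_\alpha$ for the second assertion. Your explicit inclusion of the $\max(0,\cdot)$ to secure $C\geq 0$ and your handling of the case $|f'|=0$ are minor refinements, but the argument is the same.
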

\begin{proof}
According to the requirements made on $(D,f)$, $f$ is Lipschitz
with a Lipschitz constant $L>0$. Furthermore, let $\alpha>0$
be given. Then for all $n\in\IN$,
$\frac{1}{n}\sum^{n-1}_{k=0}\eta_\alpha(|f'(f^k(x))|)\leq
\ln(\max(\alpha,L))$ holds. Hence it follows the upper
bound on $\limsup_{n\to\infty}\frac{1}{n}\sum^{n-1}_{k=0}
\eta_\alpha(|f'(f^k(x))|)\leq\ln(\max(\alpha,L))$.
The second assertion follows from the
fact that $\ln(x)\leq\eta_\alpha(x)$ holds for all $x>0$,
$\alpha>0$. 
\end{proof}
\begin{proposition}
Let $x\in D$ be given. If $\lambda(x)$ exists, then
also the limit
\begin{equation}
\label{ljap_sup}
\lim_{\substack{\alpha\to 0\\ \alpha>0}}\overline{\lambda}_\alpha(x)=:
\overline{\lambda}(x)
\end{equation}
exists and $\overline{\lambda}(x)\geq\lambda(x)$.
\end{proposition}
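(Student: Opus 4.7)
The plan is to prove this by a monotone convergence argument: I will show that $\overline{\lambda}_\alpha(x)$ is monotone in $\alpha$ and bounded below by $\lambda(x)$, so that letting $\alpha \to 0^+$ through a monotone sequence yields a limit satisfying the claimed inequality.

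First I would establish the key pointwise monotonicity: for every $y > 0$, the map $\alpha \mapsto \eta_\alpha(y)$ is non-decreasing on $(0,\infty)$. This is a direct case analysis: if $\alpha_1 < \alpha_2 \leq y$ both lie below $y$, then $\eta_{\alpha_1}(y) = \eta_{\alpha_2}(y) = \ln(y)$; if $\alpha_1 \leq y < \alpha_2$, then $\eta_{\alpha_1}(y) = \ln(y) \leq \ln(\alpha_2) = \eta_{\alpha_2}(y)$; and if $y < \alpha_1 < \alpha_2$, then $\eta_{\alpha_1}(y) = \ln(\alpha_1) < \ln(\alpha_2) = \eta_{\alpha_2}(y)$. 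In all cases, $\eta_{\alpha_1}(y) \leq \eta_{\alpha_2}(y)$. In particular $\eta_\alpha(y) \geq \ln(y)$ holds for all $\alpha, y > 0$.

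Next I would transfer the monotonicity to $\overline{\lambda}_\alpha(x)$. Applying the inequality $\eta_{\alpha_1}(y) \leq \eta_{\alpha_2}(y)$ pointwise to $y = |f'(f^k(x))|$ and summing yields
\begin{equation*}
\frac{1}{n}\sum_{k=0}^{n-1}\eta_{\alpha_1}(|f'(f^k(x))|) \leq
\frac{1}{n}\sum_{k=0}^{n-1}\eta_{\alpha_2}(|f'(f^k(x))|)
\end{equation*}
for every $n$, and since $\limsup$ preserves non-strict inequalities, $\overline{\lambda}_{\alpha_1}(x) \leq \overline{\lambda}_{\alpha_2}(x)$ whenever $\alpha_1 \leq \alpha_2$. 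The same argument, applied to $\eta_\alpha(y) \geq \ln(y)$, gives the lower bound
\begin{equation*}
\overline{\lambda}_\alpha(x) \geq \limsup_{n\to\infty}\frac{1}{n}\sum_{k=0}^{n-1}\ln(|f'(f^k(x))|) = \lambda(x),
\end{equation*}
where the last equality uses the hypothesis that $\lambda(x)$ exists (so the limsup coincides with the limit).

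Combining these two facts, $\alpha \mapsto \overline{\lambda}_\alpha(x)$ is non-decreasing on $(0,\infty)$ and bounded below by $\lambda(x)$, so as $\alpha \to 0^+$ it is monotone non-increasing and bounded below. By the monotone convergence theorem for real sequences the limit in \eqref{ljap_sup} exists and equals $\inf_{\alpha > 0}\overline{\lambda}_\alpha(x)$, which is $\geq \lambda(x)$. There is no real obstacle here; the only point requiring a bit of care is the correct direction of the monotonicity in $\alpha$, but once the elementary case analysis for $\eta_\alpha$ is in place, the rest follows by standard $\limsup$ manipulations.
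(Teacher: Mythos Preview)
Your proof is correct and follows essentially the same approach as the paper: the paper's proof simply records the chain of inequalities $\ln(y)\leq\eta_\alpha(y)\leq\eta_\beta(y)$ for $0<\alpha\leq\beta$, deduces $\lambda(x)\leq\overline{\lambda}_\alpha(x)\leq\overline{\lambda}_\beta(x)$, and then lets $\alpha\to 0^+$. Your version is more explicit in justifying the monotonicity of $\eta_\alpha$ via case analysis and in invoking the monotone convergence theorem, but the underlying argument is identical.
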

\begin{proof}
Since $\ln(x)\leq\eta_\alpha(x)\leq\eta_\beta(x)$ holds
for all $x>0$, $0<\alpha\leq\beta$, also
$\lambda(x)\leq\overline{\lambda}_\alpha(x)\leq
\overline{\lambda}_\beta(x)$ follows. Letting
$\alpha\to 0$, $\alpha > 0$, the assertion follows.
\end{proof}
\begin{theorem}
\label{thm:ubnd}
Let $(D,f)$ be as in Assumption \ref{ass:main}, $\sigma(x,p)$ as in
(\ref{main_gr}) and $\overline{\lambda}(x)$ as in
(\ref{ljap_sup}). Let $x\in\IQ\cap D$ be given, then for any
$\varepsilon>0$ there is some $p_0\in\IZ$ such that for all $p\geq p_0$,
\begin{equation*}
\sigma(x,p)\leq\tfrac{1}{\ln(2)}\max(0,\overline{\lambda}(x))
+\varepsilon
\end{equation*}
holds if $\lambda(x)$ exists.
\end{theorem}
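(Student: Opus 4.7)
The plan is to iterate the recursion~(\ref{comp_rec2}) and translate the resulting bound on $\overline{e}_n/|\hat{x}_n|$ into a sufficient condition on the precision $m$, which then controls $m_{\min}(x,N,p)$. I would fix $\alpha>0$ together with a continuity tolerance $\beta>0$; both will be driven to zero against $\varepsilon$ at the end. By uniform continuity of $f'$ on the compact set $D$, I pick $\delta>0$ with $|y-z|\leq 2\delta \Rightarrow \bigl||f'(y)|-|f'(z)|\bigr|\leq\beta$. Whenever $\overline{e}_k\leq\delta$, combining $|\hat{x}_k-x_k|\leq\overline{e}_k\leq\delta$ with the definition of $L$ in Proposition~\ref{prop:ubound} (taking $\overline{L}$ as the tight natural Lipschitz bound permitted by Corollary~\ref{cor:encf}) yields
\begin{equation*}
\overline{L}(\hat{x}_k,\overline{e}_k)\;\leq\;|f'(x_k)|+2\beta\;\leq\;\Bigl(1+\tfrac{2\beta}{\alpha}\Bigr)\,\max\bigl(|f'(x_k)|,\alpha\bigr).
\end{equation*}

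Next I would iterate~(\ref{comp_rec2}). Writing $M:=\sup\{|x|:x\in D\}$ and provisionally assuming $\overline{e}_k\leq\delta$ for every $k\leq N$, telescoping gives
\begin{equation*}
\overline{e}_N\;\leq\;2^{-m}M\sum_{n=0}^{N}P_n,\qquad P_n:=\prod_{k=n}^{N-1}\overline{L}(\hat{x}_k,\overline{e}_k),
\end{equation*}
with the convention $P_N=1$. Since $\lambda(x)$ is assumed to exist, the Cesaro mean $\tfrac{1}{n}\sum_{k=0}^{n-1}\ln|f'(x_k)|$ converges to $\lambda(x)$, and a standard argument then shows that the corresponding supremum over tails satisfies $\max_n\tfrac{1}{N}\sum_{k=n}^{N-1}\eta_\alpha(|f'(x_k)|)\to\max(\overline{\lambda}_\alpha(x),0)$ as $N\to\infty$. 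Hence for any $\beta'>0$ and all sufficiently large $N$, every $P_n$ is bounded by $(1+2\beta/\alpha)^N\exp\bigl(N\max(\overline{\lambda}_\alpha(x),0)+\beta' N\bigr)$, uniformly in $n$, so $\sum_{n=0}^{N}P_n\leq (N{+}1)(1+2\beta/\alpha)^N\exp\bigl(N\max(\overline{\lambda}_\alpha(x),0)+\beta' N\bigr)$. Combined with Assumption~\ref{ass:main}, which forces $-\ld|\hat{x}_N|\leq-\ld B(N)=o(N)$, the termination test~(\ref{alg:prec}) $\overline{e}_N\leq\tfrac{10^{-p}}{1+10^{-p}}|\hat{x}_N|$ is ensured by
\begin{equation*}
m\;\geq\;\tfrac{N}{\ln 2}\,\max\bigl(\overline{\lambda}_\alpha(x),0\bigr)+N\Bigl[\tfrac{\beta'}{\ln 2}+\ld\!\bigl(1+\tfrac{2\beta}{\alpha}\bigr)\Bigr]+p\,\ld 10+o(N).
\end{equation*}
Dividing by $N$ and taking $\limsup$ yields $\sigma(x,p)\leq\max(0,\overline{\lambda}_\alpha(x))/\ln 2+\ld(1+2\beta/\alpha)+\beta'/\ln 2$.

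The main obstacle is twofold: justifying the provisional hypothesis $\overline{e}_k\leq\delta$ inductively, and controlling the supremum of the partial-tail Cesaro means. The first I would handle by induction on $k$: picking $m$ according to the displayed inequality augmented by an additive constant $C(\delta,\alpha,\beta)$ absorbs the base case $\overline{e}_0=2^{-m}|\hat{x}_0|\leq\delta$, and the same telescoping then preserves $\overline{e}_k\leq\delta$ throughout; the hypothesis $p\geq p_0(\varepsilon)$ is precisely what makes the $p\,\ld 10$ term dominate $C$. The second, more delicate point rests on the existence of $\lambda(x)$: convergence of $\tfrac{1}{n}\sum_{k<n}\ln|f'(x_k)|$ forces $\max_n(S_N-S_n)/N\to\max(\lambda(x),0)$ for the partial sums $S_n$, and the truncated $\eta_\alpha$ variant inherits a corresponding uniform bound, with the slack absorbed into $\beta'$. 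Finally, letting $\beta,\beta'\to 0$ kills the correction terms and $\alpha\to 0^+$ invokes~(\ref{ljap_sup}) to replace $\overline{\lambda}_\alpha(x)$ by $\overline{\lambda}(x)$; choosing all three parameters small enough that their aggregate contribution stays below $\varepsilon$ delivers $\sigma(x,p)\leq\max(0,\overline{\lambda}(x))/\ln 2+\varepsilon$ for every $p\geq p_0(\varepsilon)$.
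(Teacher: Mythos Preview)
Your overall strategy matches the paper's: iterate~(\ref{comp_rec2}), bound each factor $\overline{L}(\hat{x}_k,\overline{e}_k)$ by something close to $|f'(x_k)|$, and control the resulting sum of tail products. Your inductive scheme for maintaining $\overline{e}_k\leq\delta$ is a legitimate alternative to the paper's device, which instead works at precision $m=m_{\min}$ and uses the halting condition to obtain $\overline{e}_n\leq M\cdot 10^{-p}/(1+10^{-p})$ directly.

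The genuine gap is in what you call the ``standard argument''. Writing $S_n=\sum_{k<n}\ln|f'(x_k)|$ and $T_n=\sum_{k<n}\eta_\alpha(|f'(x_k)|)$, your claim that convergence $S_n/n\to\lambda(x)$ forces
\[
\limsup_{N\to\infty}\ \max_{0\leq n\leq N}\frac{T_N-T_n}{N}\ \leq\ \max\bigl(0,\overline{\lambda}_\alpha(x)\bigr)
\]
is \emph{not} standard. The obvious estimate uses $T_N-T_n=(S_N-S_n)+\bigl[(T_N-S_N)-(T_n-S_n)\bigr]\leq(S_N-S_n)+(T_N-S_N)$ (since $T_k-S_k$ is non-decreasing), yielding only
\[
\limsup_N\max_n\frac{T_N-T_n}{N}\ \leq\ \max(\lambda(x),0)\ +\ \bigl(\overline{\lambda}_\alpha(x)-\lambda(x)\bigr),
\]
which agrees with the target when $\lambda(x)\geq 0$ but is strictly too weak when $\lambda(x)<0$ (by the amount $-\lambda(x)$, which does \emph{not} vanish as $\alpha\to 0$). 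Nothing you wrote indicates how to close this gap.

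This is precisely the technical heart of the paper's proof. The paper lets $K(n)$ be the starting index of the maximizing tail product and splits into two cases. If $(K(n))_n$ is bounded, the tail is essentially the full sum and one reads off $\overline{\lambda}_\alpha(x)$. If $(K(n))_n$ is unbounded, one first deduces $\lambda(x)\leq 0$, and then argues by contradiction: assuming $\ln(P_N)/N>\ln(2)\varepsilon$ for some large $N$, one chooses $\alpha$ so small that $\eta_\alpha=\ln$ on all relevant indices, splits $\frac{1}{N}\sum_{l<N}\ln|f'(x_l)|$ at $l=K(N)$, and uses the convergence of $S_n/n$ on both pieces to force a contradiction. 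This case distinction and the contradiction argument are where the existence of $\lambda(x)$ is used in an essential, non-routine way; your sketch does not address it.
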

So there is the following bound on the loss of significance rate.
\begin{corollary}
\label{main_stat1}
Let $(D,f)$ be as in Assumption \ref{ass:main}, $\sigma(x,p)$ as in
(\ref{main_gr}), $\sigma(x)$ the loss of significance rate,
$\overline{\lambda}(x)$ as in (\ref{ljap_sup}) and $\lambda(x)$ the
Lyapunov exponent. Then,
\begin{equation*}
\tfrac{1}{\ln(2)}\max(0,\lambda(x))\leq \sigma(x)\leq
\tfrac{1}{\ln(2)}\max(0,\overline{\lambda}(x))
\end{equation*}
holds for all $x\in\IQ\cap D$ if $\lambda(x)$ exists.
\end{corollary}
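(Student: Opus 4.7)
The plan is to obtain Corollary \ref{main_stat1} by simply taking the limit $p\to\infty$ in the two $p$-dependent bounds already established, namely the lower bound $\sigma(x,p)\ge \max(0,\lambda(x))/\ln(2)$ from the preceding theorem and the upper bound $\sigma(x,p)\le \tfrac{1}{\ln(2)}\max(0,\overline{\lambda}(x))+\varepsilon$ from Theorem \ref{thm:ubnd}. Since $\sigma(x)$ is by definition $\lim_{p\to\infty}\sigma(x,p)$, nothing deeper than monotonicity/limit arguments should be needed.

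First I would handle the lower bound. The preceding theorem gives $\sigma(x,p)\ge \max(0,\lambda(x))/\ln(2)$ for every $p\in\IZ$. The right-hand side is independent of $p$, so passing to the limit $p\to\infty$ immediately yields $\sigma(x)\ge \max(0,\lambda(x))/\ln(2)$. No subtlety here, since it is the same inequality for every $p$.

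Next I would address the upper bound. Fix $\varepsilon>0$. Theorem \ref{thm:ubnd} supplies a $p_0\in\IZ$ such that for all $p\ge p_0$ one has $\sigma(x,p)\le \tfrac{1}{\ln(2)}\max(0,\overline{\lambda}(x))+\varepsilon$. Passing to the limit $p\to\infty$ (which exists by the defining equation \eqref{main_lsr}), and noting that $m_{min}(x,N,p)$ is non-decreasing in $p$ so that $\sigma(x,p)$ is non-decreasing and its limit equals $\sigma(x)$, gives $\sigma(x)\le \tfrac{1}{\ln(2)}\max(0,\overline{\lambda}(x))+\varepsilon$. Since $\varepsilon>0$ was arbitrary, the desired inequality $\sigma(x)\le \tfrac{1}{\ln(2)}\max(0,\overline{\lambda}(x))$ follows.

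The only pitfall I anticipate is making sure the passage $\lim_{p\to\infty}\sigma(x,p)$ is legitimate; the monotonicity of $m_{min}$ in $p$ (stricter precision demands can only require a larger minimal significand) together with the boundedness result already derived (the corollary before Definition \ref{def:ljap}) guarantees that the limit exists as a finite real number, so the combination of the two one-sided bounds is rigorous and the statement follows in two short lines.
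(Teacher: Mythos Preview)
Your proposal is correct and matches the paper's approach: the paper states Corollary \ref{main_stat1} without proof, treating it as an immediate consequence of the lower-bound theorem and Theorem \ref{thm:ubnd}, and your argument is precisely the natural two-line derivation the authors have in mind. The monotonicity remark on $m_{min}(x,N,p)$ in $p$ is a nice touch that makes explicit why the limit $\sigma(x)=\lim_{p\to\infty}\sigma(x,p)$ exists (the paper implicitly relies on this in the earlier corollary asserting $\sigma(x,p)\leq\sigma(x)\leq C$).
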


Before the proof of the theorem can be presented, the following lemma
is needed.
\begin{lemma}
Let $\varepsilon\geq 0$ and $\alpha>\sqrt{\varepsilon}$.
Then for all $x>0$,
\begin{equation*}
\ln(x+\varepsilon)\leq\eta_\alpha(x)+\sqrt{\varepsilon}
\end{equation*}
holds.
\end{lemma}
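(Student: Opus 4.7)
The plan is to split into the two cases in the definition of $\eta_\alpha$ and, in each case, reduce the claim to the elementary inequality $\ln(1+y)\leq y$ (valid for $y>-1$), exploiting the hypothesis $\alpha>\sqrt{\varepsilon}$ at the critical step. The case $\varepsilon=0$ is trivial since then $\sqrt{\varepsilon}=0$ and $\ln(x)\leq\eta_\alpha(x)$ by the definition of $\eta_\alpha$, so in the remainder I assume $\varepsilon>0$.

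First I would treat the case $x\geq\alpha$. Here $\eta_\alpha(x)=\ln(x)$, so the claim becomes $\ln(1+\varepsilon/x)\leq\sqrt{\varepsilon}$. The standard estimate $\ln(1+y)\leq y$ gives $\ln(1+\varepsilon/x)\leq\varepsilon/x$, and since $x\geq\alpha>\sqrt{\varepsilon}>0$ I obtain $\varepsilon/x<\varepsilon/\sqrt{\varepsilon}=\sqrt{\varepsilon}$, which closes the case.

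Next I would treat the case $0<x<\alpha$. Here $\eta_\alpha(x)=\ln(\alpha)$. Since $\ln$ is monotone, $\ln(x+\varepsilon)\leq\ln(\alpha+\varepsilon)=\ln(\alpha)+\ln(1+\varepsilon/\alpha)$. Applying $\ln(1+y)\leq y$ again yields $\ln(1+\varepsilon/\alpha)\leq\varepsilon/\alpha<\sqrt{\varepsilon}$, the last inequality coming again from $\alpha>\sqrt{\varepsilon}$. Combining gives the desired bound.

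There is no real obstacle here; the only point that requires a moment of care is making sure the bound $\varepsilon/x\leq\sqrt{\varepsilon}$ (respectively $\varepsilon/\alpha\leq\sqrt{\varepsilon}$) is used only where $x\geq\alpha$, so that the hypothesis $\alpha>\sqrt{\varepsilon}$ actually translates into a usable denominator bound. This is exactly why the threshold in the definition of $\eta_\alpha$ was chosen to match $\sqrt{\varepsilon}$ in the statement.
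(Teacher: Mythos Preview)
Your proof is correct and follows essentially the same approach as the paper: both split into the two cases $x\geq\alpha$ and $x<\alpha$ and reduce to the same elementary estimate (the paper uses $\exp(\sqrt{\varepsilon})-1\geq\sqrt{\varepsilon}$, which is exactly your $\ln(1+y)\leq y$ at $y=\sqrt{\varepsilon}$). Your presentation is slightly more direct, but the underlying argument is the same.
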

\begin{proof}
There is nothing to prove in the case $\varepsilon=0$. So
let $\varepsilon>0$. Two cases are considered.
 
1st case: $x\geq\alpha$.
Then the inequality reads
$\ln(x+\varepsilon)\leq\ln(x)+\sqrt{\varepsilon}$
which is equivalent to
$x\geq\frac{\varepsilon}{\exp(\sqrt{\varepsilon})-1}$.
Since $\frac{\varepsilon}{\exp(\sqrt{\varepsilon})-1}\leq
\frac{\varepsilon}{\sqrt{\varepsilon}}<\alpha\leq x$,
the assertion follows.

2nd case: $x<\alpha$.
Then the inequality reads
$\ln(x+\varepsilon)\leq\ln(\alpha)+\sqrt{\varepsilon}$
which is equivalent to
$x\leq\alpha\exp(\sqrt{\varepsilon})-\varepsilon$.
A sufficient condition to prove the assertion is
$\alpha\leq\alpha\exp(\sqrt{\varepsilon})-\varepsilon$
which is equivalent to
$\alpha\geq\frac{\varepsilon}{\exp(\sqrt{\varepsilon})-1}$.
This was already proven in the first case.
\end{proof}
Now everything is prepared to prove Theorem \ref{thm:ubnd}.
\begin{proof}[Proof of Theorem \ref{thm:ubnd}]
Let $N\in\IN$, $B(N):=\min\{|\hat{x}_n|:n=0,1,\dots,N\}> 0$
and $M > 0$ a constant with $|\hat{x}_n|\leq M$ for all $n\in\IN$.
Starting with Equation (\ref{comp_rec2}) and iterating gives
\begin{align*}
\overline{e}_n & =\overline{e}_0\prod^{n-1}_{l=0}
\overline{L}(\hat{x}_l,\overline{e}_l) + 2^{-m}\sum^{n}_{k=1}
|\hat{x}_k|\prod^{n-1}_{l=k}\overline{L}(\hat{x}_l,\overline{e}_l)\\
 & = 2^{-m}\sum^{n}_{k=0}|\hat{x}_k|\prod^{n-1}_{l=k}
 \overline{L}(\hat{x}_l,\overline{e}_l)
 \leq M2^{-m}\sum^{n}_{k=0}\prod^{n-1}_{l=k}
 \overline{L}(\hat{x}_l,\overline{e}_l).
\end{align*}
Define
\begin{equation*}
sp_n:=\sum^{n}_{k=0}\prod^{n-1}_{l=k}
\overline{L}(\hat{x}_l,\overline{e}_l)
\end{equation*}
and
\begin{equation*}
SP(N):=\max\{sp_n : n=0,1,\dots,N\}.
\end{equation*}
Then, $\overline{e}_n\leq M2^{-m}SP(N)$ follows for all $n\leq N$.
A sufficient condition for the algorithm to
terminate is given by
$\frac{M}{B(N)}2^{-m}SP(N)\leq\frac{10^{-p}}{1+10^{-p}}$. Hence,
\begin{equation*}
m_{min}(x,N,p)\leq \max(1,C-\ld(B(N))+\ld(SP(N)))
\end{equation*}
follows with
$C=\ld(M)+p\cdot\ld(10)+\ld(1+10^{-p})$. Using the Assumption
\ref{ass:main} leads to
\begin{equation*}
\sigma(x,p)\leq \frac{1}{\ln(2)}\max\left(0,
\limsup_{N\to\infty}\frac{1}{N}\ln(SP(N))\right).
\end{equation*}
By definition, $\limsup_{N\to\infty}\frac{\ln(SP(N))}{N}\leq
\limsup_{n\to\infty}\frac{\ln(sp_n)}{n}$ follows and hence
\begin{equation*}
\sigma(x,p)\leq \frac{1}{\ln(2)}\max\left(0,
\limsup_{n\to\infty}\frac{\ln(sp_n)}{n}\right).
\end{equation*}
Next let
\begin{equation*}
p_{k,n}:= \prod^{n-1}_{l=k}\overline{L}(\hat{x}_l,\overline{e}_l)
\end{equation*}
for $n\in\IN$ and $k\leq n$, and furthermore
\begin{equation*}
P_n:=\max\{p_{k,n} : k=0,1,\dots,n\},
\end{equation*}
then $sp_n\leq (n+1)P_n$ follows for all $n\in\IN$. This gives
\begin{equation*}
\sigma(x,p)\leq \frac{1}{\ln(2)}\max\left(0,
\limsup_{n\to\infty}\frac{\ln(P_n)}{n}\right).
\end{equation*}
Let $K(n)\in\{0,\dots,n\}$ be the smallest number such that
$\prod^{n-1}_{l=K(n)}\overline{L}(\hat{x}_l,\overline{e}_l)=P_n$.
Then consider $\frac{\ln(P_n)}{n}=
\frac{1}{n}\sum^{n-1}_{l=K(n)}\ln(\overline{L}(\hat{x}_l,\overline{e}_l))$.
Let $L'$ be a Lipschitz constant of
$f'$, then $L(\hat{x}_n,\overline{e}_n)\leq|f'(x_n)|+
L'2\overline{e}_n$ holds for all $n\in\IN$. Consequently,
there exists some $\overline{L}'\geq L'$ such that
$\overline{L}(\hat{x}_n,\overline{e}_n)\leq|f'(x_n)|+
2\overline{L}'\overline{e}_n$ holds for all $n\in\IN$.
This inequality leads to
$\overline{L}(\hat{x}_n,\overline{e}_n)\leq|f'(x_n)|+
2\overline{L}'M\frac{10^{-p}}{1+10^{-p}}\leq
|f'(x_n)|+2\overline{L}'M\cdot10^{-p}$.
Inserting gives
\begin{equation*}
\frac{\ln(P_n)}{n}\leq\frac{1}{n}\sum^{n-1}_{l=K(n)}\ln(|f'(x_l)|+
2\overline{L}'M\cdot 10^{-p}).
\end{equation*}

Now let $\varepsilon>0$ and $0<\alpha<1$ be given. Then choose
$p_0\in\IN$ such that $\sqrt{2\overline{L}'M\cdot 10^{-p_0}} <
\min(\alpha,\ln(2)\frac{\varepsilon}{2})$ holds. Then for all
$p\geq p_0$, the above lemma gives
\begin{align}
\frac{\ln(P_n)}{n} &\leq \frac{1}{n}\sum^{n-1}_{l=K(n)}
\left(\eta_\alpha(|f'(x_l)|)+\ln(2)\frac{\varepsilon}{2}\right)\\
\label{main_PE}
 &\leq \ln(2)\frac{\varepsilon}{2} + 
 \frac{1}{n}\sum^{n-1}_{l=K(n)}\eta_\alpha(|f'(x_l)|).
\end{align}
Consider the sequence $(K(n))_{n\in\IN}$. Observe that, first the
sequence $(K(n))_{n\in\IN}$ is increasing and second if $K(n+1)>K(n)$
for some $n\in\IN$, then $K(n+1)=n$ or $K(n+1)=n+1$. There are two cases.

1st case: $(K(n))_{n\in\IN}$ is bounded. Then, there exists
some constant $N_0\in\IN$ such that $K(n)=K(N_0)$ holds for all $n\geq N_0$.
Choose now $\alpha$ small enough such that $\overline{\lambda}_\alpha(x)\leq
\overline{\lambda}(x)+\ln(2)\frac{\varepsilon}{2}$ holds.
Then, compute the upper limit to $\limsup_{n\to\infty}\frac{1}{n}
\sum^{n-1}_{l=K(n)}\eta_\alpha(|f'(x_l)|) = \limsup_{n\to\infty}\frac{1}{n}
\sum^{n-1}_{l=K(N_0)}\eta_\alpha(|f'(x_l)|) = \limsup_{n\to\infty}\frac{1}{n}
\sum^{n-1}_{l=0}\eta_\alpha(|f'(x_l)|) = \overline{\lambda}_\alpha(x)$.
By taking the upper limit of (\ref{main_PE}),
$\limsup_{n\to\infty}\frac{\ln(P_n)}{n}\leq \ln(2)\frac{\varepsilon}{2} +
\overline{\lambda}_\alpha(x) \leq \ln(2)\varepsilon + \overline{\lambda}(x)$
follows.

2nd case: $(K(n))_{n\in\IN}$ is not bounded. Then, for any
$\delta > 0$ and any $N_0\in\IN$ there is some $n\geq N_0$
with $\frac{\ln(P_n)}{n}<\delta$. Since, by definition,
$\sum^{n-1}_{l=0}\ln(\overline{L}(\hat{x}_l,\overline{e}_l))\leq
\ln(P_n)$ holds as well as
$|f'(x_l)|\leq\overline{L}(\hat{x}_l,\overline{e}_l)$, the
inequality
$\frac{1}{n}\sum^{n-1}_{l=0}\ln(|f'(x_l)|)\leq\frac{\ln(P_n)}{n}$
follows. This shows $\lambda(x)\leq 0$.

Next it is stated that for all $\varepsilon >0$ and $p\geq p_0$,
$\limsup_{n\to\infty}\frac{\ln(P_n)}{n}\leq \ln(2)\varepsilon$
holds. This shows $\sigma(x,p)\leq\varepsilon=\frac{1}{\ln(2)}
\max(0,\lambda(x))+\varepsilon\leq\frac{1}{\ln(2)}
\max(0,\overline{\lambda}(x))+\varepsilon$.

Assume otherwise. Then, for some $\varepsilon>0$ and $N\in\IN$,
first $\frac{\ln(P_N)}{N}> \ln(2)\varepsilon$ holds and second
$\lambda(x)-\ln(2)\frac{\varepsilon}{4}<
\frac{1}{n}\sum_{l=0}^{n-1}\ln(|f'(x_l)|)<
\lambda(x)+\ln(2)\frac{\varepsilon}{4}$ holds for all
$n\geq K(N)$. Using (\ref{main_PE}), the first expression
gets $\frac{1}{N}\sum^{N-1}_{l=K(N)}\eta_\alpha(|f'(x_l)|)>
\ln(2)\frac{\varepsilon}{2}$. Choose $\alpha$ small enough
such that $\eta_\alpha(|f'(x_l)|)=\ln(|f'(x_l)|)$ holds for
all $l\leq N$. Then, for sufficiently high $p$,
$\frac{1}{N}\sum^{N-1}_{l=K(N)}\ln(|f'(x_l)|)>
\ln(2)\frac{\varepsilon}{2}$ follows. In the second statement,
the sum can be split the following way:
$\frac{1}{N}\sum^{K(N)-1}_{l=0}\ln(|f'(x_l)|)+
\frac{1}{N}\sum^{N-1}_{l=K(N)}\ln(|f'(x_l)|) < \lambda(x)+
\ln(2)\frac{\varepsilon}{4}$. The first addend on the left side
is bounded form below by
$\frac{K(N)}{N}(\lambda(x)-\ln(2)\frac{\varepsilon}{4})\geq
\lambda(x)-\ln(2)\frac{\varepsilon}{4}$, the second addend is
bounded from below by $\ln(2)\frac{\varepsilon}{2}$. Hence, 
$\lambda(x)+\ln(2)\frac{\varepsilon}{4}<\frac{1}{N}\sum_{l=0}^{N-1}
\ln(|f'(x_l)|)<\lambda(x)+\ln(2)\frac{\varepsilon}{4}$
follows, but this is a contradiction.
\end{proof}

In the end, it is shown that, if $\lambda(x)=\overline{\lambda}(x)$
holds, the algorithm presented here
is optimal with respect to the loss of significance rate. This
means that no algorithm with the specification presented at the
beginning of this section has a lower loss of significance
rate than the algorithm presented in this section.
\begin{proposition}
Let $(x_n)_n$ be an orbit of the dynamical system $(D,f)$ and
$\lambda(x_0)>0$. Then, for any $\varepsilon>0$ there exists
an $N_0\in\IN$ such that for any $N\geq N_0$ there is some
$\delta>0$ such that the following holds. Let an initial
value $y_0\in[x_0-\delta,x_0+\delta]\cap D$ be given
and consider the corresponding orbit $(y_n)_n$. Then,
\begin{equation*}
|y_N - x_N|\geq e^{N(\lambda(x_0) - \varepsilon)}|y_0 - x_0|
\end{equation*}
holds.
\end{proposition}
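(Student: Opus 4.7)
The plan is to apply the mean-value theorem iteratively along the orbit so as to express the separation $|y_N-x_N|$ as a product of derivatives along a nearby trajectory, and then to use the continuity of $f'$ together with the definition of $\lambda(x_0)$ to compare this product with the corresponding product along $(x_n)_n$. Concretely, since $D$ is an interval, for each $n$ there is some $\xi_n$ lying between $x_n$ and $y_n$ with $y_{n+1}-x_{n+1}=f'(\xi_n)(y_n-x_n)$. Iterating gives
\[
|y_N-x_N|=|y_0-x_0|\prod_{n=0}^{N-1}|f'(\xi_n)|,
\]
so the whole task reduces to showing that the product on the right is at least $e^{N(\lambda(x_0)-\varepsilon)}$ provided $|y_0-x_0|$ is small enough.

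The next step is to separate ``typical'' and ``local'' behaviour. Because $\lambda(x_0)$ exists (finitely) and $\lambda(x_0)>0$, necessarily $f'(x_n)\ne 0$ for every $n$; I can therefore pick $N_0\in\IN$ so that $\frac{1}{N}\sum_{n=0}^{N-1}\ln|f'(x_n)|\ge\lambda(x_0)-\varepsilon/2$ for all $N\ge N_0$. Fixing such an $N$, continuity of $f'$ at each $x_n$ (where it is nonzero) yields for every $n\in\{0,\dots,N-1\}$ some $\eta_n>0$ with
\[
|\xi-x_n|<\eta_n\;\Longrightarrow\;\ln|f'(\xi)|\ge\ln|f'(x_n)|-\varepsilon/2,
\]
and I set $\eta:=\min_{n<N}\eta_n>0$ (a minimum over a finite index set). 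To ensure $|\xi_n-x_n|\le|y_n-x_n|<\eta$ for every $n\le N$, I invoke the global Lipschitz bound $L:=\sup_{x\in D}|f'(x)|<\infty$, which gives $|y_n-x_n|\le L^n|y_0-x_0|$, and then take $\delta:=\eta L^{-N}$. Combining these ingredients yields
\[
\prod_{n=0}^{N-1}|f'(\xi_n)|\ge e^{-N\varepsilon/2}\prod_{n=0}^{N-1}|f'(x_n)|\ge e^{N(\lambda(x_0)-\varepsilon)},
\]
which together with the mean-value identity proves the claim.

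The main obstacle, and really the delicate conceptual point, is the order of quantifiers: $\delta$ depends on $N$ and must in general be chosen exponentially small (of order $L^{-N}$) to keep every intermediate $\xi_n$ inside the neighbourhood where $\ln|f'|$ deviates from $\ln|f'(x_n)|$ by at most $\varepsilon/2$. The statement permits this, so the construction is legitimate, but the proof should emphasize that $\lambda(x_0)$ being finite is what makes $f'(x_n)\ne 0$ for all $n$ and hence what makes $\ln|f'|$ continuous at every point of the finite segment $(x_n)_{n<N}$; without this observation, no positive $\eta$ would be available and the argument would break down at the very first step.
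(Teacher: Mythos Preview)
Your proof is correct and in fact a bit cleaner than the paper's. The key structural difference is that you apply the first-order mean value theorem to get the exact identity $|y_N-x_N|=|y_0-x_0|\prod_{n=0}^{N-1}|f'(\xi_n)|$ and then control the factors using only the continuity of $f'$ at the finitely many points $x_0,\dots,x_{N-1}$ (which is legitimate precisely because $f'(x_n)\ne 0$, as you observe). The paper instead expands $f$ to second order, obtaining $|y_n-x_n|\ge(|f'(x_{n-1})|-\tfrac12 L'|y_{n-1}-x_{n-1}|)|y_{n-1}-x_{n-1}|$ with $L'$ a Lipschitz constant for $f'$, and then iterates this lower bound and handles the logarithms via the elementary estimate $\ln(|f'(x_n)|-\sigma\delta')\ge\ln|f'(x_n)|-\frac{\sigma}{1-\sigma}$ with $\delta'<\min_n|f'(x_n)|$. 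Your route needs only $f\in C^1$, whereas the paper's exploits the standing $C^2$ assumption; on the other hand, the paper's choice of $\delta$ is slightly more quantitative, depending on $L'$ and $\min_{n\le N}|f'(x_n)|$ rather than on implicit moduli of continuity. One tiny remark: your choice $\delta=\eta L^{-N}$ tacitly uses $L\ge 1$; this is indeed forced by $\lambda(x_0)>0$ (since $\lambda(x_0)\le\ln L$), but it would not hurt to say so, or simply to write $\delta=\eta\max(1,L)^{-N}$.
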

\begin{proof}
Let $\varepsilon'>0$ be given. Then there exists some $N_0\in\IN$
such that $\frac{1}{N}\sum_{n=0}^{N-1}\ln(|f'(x_n)|)\geq
\lambda(x_0) - \varepsilon'$ holds for all $N\geq N_0$. For given
$N\geq N_0$ there is some $\delta'>0$ with
$\min\{|f'(x_n)| : 0\leq n\leq N\}>\delta'$, otherwise
$\lambda(x_0)$ would not exist. Consider now an
orbit $(y_n)_n$ with $|y_n - x_n| \leq 2\sigma\frac{\delta'}{L'}$
for $n=0,\dots,N$ where $L'$ is a Lipschitz constant of $f'$
and $0<\sigma<1$ arbitrary. Then, for $n=1,\dots,N$, the following
estimation holds.
\begin{align*}
|y_n - x_n| &= |f(y_{n-1}) - f(x_{n-1})|\\
 &= |f'(x_{n-1})(y_{n-1} - x_{n-1}) + \frac{1}{2}f''(\xi_{n-1})
    (y_{n-1} - x_{n-1})^2|\\
 &\geq (|f'(x_{n-1})| - \frac{1}{2}|f''(\xi_{n-1})|\cdot |y_{n-1} - x_{n-1}|)
       |y_{n-1} - x_{n-1}|\\
 &\geq (|f'(x_{n-1})| - \frac{1}{2}L'|y_{n-1} - x_{n-1}|)|y_{n-1} - x_{n-1}|\\
 &\geq (|f'(x_{n-1})| - \sigma\delta')|y_{n-1} - x_{n-1}|
\end{align*}
where $\xi_{n-1}\in[x_{n-1},y_{n-1}]\cup[y_{n-1},x_{n-1}]$.
Iterating finally gives
$|y_N - x_N|\geq\prod_{n=0}^{N-1}(|f'(x_n)| - \sigma\delta')|y_0 - x_0|$.
Now determine some constant $C>0$ such that
$\ln(|f'(x_n)|-\sigma\delta')\geq \ln(|f'(x_n)|) - C$ holds for all
$n=0,\dots,N$ the following way. A short calculation shows that this
is equivalent to $C\geq\ln(1+\frac{\sigma\delta'}{|f'(x_n)|-\sigma\delta'})$.
Using $\ln(1+\frac{\sigma\delta'}{|f'(x_n)|-\sigma\delta'})\leq
\ln(1+\frac{\sigma\delta'}{\delta'-\sigma\delta'})=
\ln(1+\frac{\sigma}{1-\sigma})\leq\frac{\sigma}{1-\sigma}$
finally gives $C\geq\frac{\sigma}{1-\sigma}$ as a sufficient condition.
Set $C=\frac{\sigma}{1-\sigma}$. Let $\varepsilon > 0$ be given. Set
$C=\varepsilon'=\varepsilon/2$, then
\begin{align*}
\sum_{n=0}^{N-1}\ln(|f'(x_n)| -\frac{C\delta'}{1+C})
 &\geq \sum_{n=0}^{N-1}\ln(|f'(x_n)|) - NC\\
 &\geq N(\lambda(x_0) -\varepsilon' - C)
\end{align*}
and hence
\begin{equation*}
|y_N - x_N|\geq e^{N(\lambda(x_0) - \varepsilon' - C)}|y_0 - x_0|
\end{equation*}
follows.
\end{proof}
\begin{proposition}
Let $(D,f)$ be as in Assumption \ref{ass:main} and
$x\in\IQ\cap D$ given such that $\lambda(x)$ exists and
$\lambda(x)>0$. Consider an algorithm computing an initial
segment $(x_n)_{0\leq n\leq N}$ of the orbit of $x$ with relative
error $\leq 10^{-p}$ for some $N\in\IN$, $p\in\IZ$.
Then the algorithm has a loss of significance
rate $\sigma(x)\geq\frac{1}{\ln(2)}\lambda(x)$.
\end{proposition}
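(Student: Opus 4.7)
The plan is to exploit two facts in tandem: (i) the preceding proposition, which shows that two nearby initial values $x$ and $y$ diverge at a rate at least $e^{N(\lambda(x)-\varepsilon)}$, and (ii) an indistinguishability principle: an algorithm running at internal precision $m$ must produce identical output on input $x$ and on any rational $y$ whose precision--$m$ rounding coincides with that of $x$ (since the rounding in line {\tt 4} is the only way the input enters the computation, and subsequent operations use precision $m$).

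First I would fix $\varepsilon > 0$. By the preceding proposition, there exists $N_0 \in \IN$ such that for all $N \geq N_0$ there is $\delta > 0$ with
\[
|y_N - x_N| \;\geq\; e^{N(\lambda(x) - \varepsilon)}\,|y - x|
\]
for every $y_0 = y \in [x-\delta, x+\delta] \cap D$. Now let $m = m_{min}(x, N, p)$ and choose $N$ large enough that $2^{-m+1} < \delta$. By the indistinguishability principle, the algorithm produces the same output $\hat z_N$ on the inputs $x$ and on any $y \in \IQ \cap [x-\delta, x+\delta]$ with $rd(y, m) = rd(x, m)$; and such a $y$ can be chosen with $|y - x| \geq 2^{-m-1}$ (the width of a rounding cell in precision $m$ around $x$).

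The correctness requirement forces both
\[
|\hat z_N - x_N| \leq 10^{-p}|x_N|, \qquad |\hat z_N - y_N| \leq 10^{-p}|y_N|,
\]
and the triangle inequality together with $M := \sup_{z\in D}|z|$ then yields $|y_N - x_N| \leq 2\cdot 10^{-p} M$. Combined with the exponential lower bound this gives
\[
2^{-m-1}\, e^{N(\lambda(x) - \varepsilon)} \;\leq\; 2\cdot 10^{-p} M,
\]
so $m \geq N(\lambda(x) - \varepsilon)/\ln 2 \,-\, p\cdot\ld(10) \,-\, \ld(4M)$. Dividing by $N$ and taking $N\to\infty$ gives $\sigma(x,p) \geq (\lambda(x) - \varepsilon)/\ln 2$, which is independent of $p$; so $\sigma(x) = \lim_{p\to\infty}\sigma(x,p) \geq (\lambda(x) - \varepsilon)/\ln 2$, and letting $\varepsilon\to 0^+$ yields the claim.

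The main obstacle I expect is the indistinguishability step, namely justifying rigorously that a \emph{general} algorithm of precision $m$ produces identical output on $x$ and on any nearby rational $y$ with $rd(x,m) = rd(y,m)$. For the algorithm specified in Section~\ref{main-sec} this is immediate from the initialisation $\ivl{x}\leftarrow rd(x,m)$ together with the fact that all intermediate values $\hat x_n$, $\overline{e}_n$ and $\overline{L}(\hat x_n,\overline{e}_n)$ are computed in precision $m$; a minor subtlety is to ensure that a suitable $y\neq x$ with $|y-x|\geq 2^{-m-1}$ and $y\in\IQ\cap D$ can actually be produced, which is guaranteed whenever $x$ is interior to $D$ and $m$ is large, handled by choosing $N_0$ correspondingly large.
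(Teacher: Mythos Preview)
Your overall strategy matches the paper's: combine the exponential-separation estimate (the preceding proposition) with an indistinguishability argument, so that a precision-$m$ algorithm producing identical output on $x$ and on a nearby $y$ cannot meet the accuracy requirement for both. The gap is in your order of quantifiers.

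The step ``let $m = m_{min}(x,N,p)$ and choose $N$ large enough that $2^{-m+1} < \delta$'' is circular. Here $\delta = \delta(N)$ is the neighbourhood size furnished by the preceding proposition, and inspection of that proof shows $\delta(N)$ may decay like $L^{-N}$ with $L=\sup_D|f'|$ (one must keep $|y_n-x_n|$ controlled for all $n\le N$, which forces $|y_0-x_0|\lesssim L^{-N}$). Thus the requirement $2^{-m+1}<\delta(N)$ would already force $m\gtrsim N\,\ld(L)\ge N\lambda(x)/\ln 2$, which is at least as strong as the conclusion you are trying to establish. For fixed $p$ you do not yet know that $m_{min}(x,N,p)\to\infty$ at all, so you cannot assume this.

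The paper avoids the circularity by reversing the quantifiers: first fix $N$ (hence $\delta$), pick $y\neq x$ with $|y-x|<\delta$, and \emph{then} choose $p_0$ large enough that $2M\cdot 10^{-p_0}<e^{N(\lambda(x)-\varepsilon)}|y-x|$. Now for any $m$ small enough that $|y-x|<\tfrac12\,ulp(\hat x_0)$ one has $\hat y_0=\hat x_0$, and correctness at level $p_0$ forces $\hat x_N=\hat y_N$ yet $|x_N-y_N|>10^{-p_0}(|x_N|+|y_N|)$, a contradiction; hence $m_{min}(x,N,p_0)$ must be large enough to separate $x$ from $y$, yielding the linear-in-$N$ lower bound. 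Since $\sigma(x)=\lim_{p\to\infty}\sigma(x,p)$, letting $p_0$ depend on $N$ (absorbing the smallness of $\delta(N)$ and $|y-x|$) is harmless. Your argument can be repaired in exactly this way: keep the indistinguishability and triangle-inequality steps, but fix $N$ first and let $p$ be chosen afterwards.
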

\begin{proof}
Let $0<\varepsilon<\lambda(x)$ be given and $N$ big enough
such that the previous proposition holds for some $\delta>0$.
Choose some $y\in[x-\delta,x+\delta]\cap D$,
$y\neq x$. Let $p_0$ be big enough such that
\begin{equation}
\label{cond}
2M10^{-p_0} < e^{N(\lambda(x)-\varepsilon)}|y - x|
\end{equation}
holds, where $M>0$ such that $|x|\leq M$ for all $x\in D$.
Consider $y$ as the initial value of another orbit $(y_n)_n$.
Then, with the above proposition,
\begin{equation}
\label{sep}
|y_N - x_N|>10^{-p_0}(|x_N|+|y_N|)
\end{equation}
follows. Condition (\ref{cond}) can also be written as
$\delta>2M10^{-p_0}e^{-N(\lambda(x)-\varepsilon)}$.

Consider now some precision $m$, the algorithm actually
is working with. Assume for simplicity further that for the
initial value $x=\hat{x}_0$ holds and assume without loss of
generality $\delta<\frac{1}{2}ulp(\hat{x}_0)$. Then, first,
$\hat{y}_0=\hat{x}_0$ holds. Second, the above condition gives 
$2^{-(m+1)}>10^{-p_0}e^{-N(\lambda(x)-\varepsilon)}$ since
$ulp(\hat{x}_0)\leq 2^{-m+1}|\hat{x}_0|\leq 2^{-m+1}M$ holds.
In other words, the above condition gives an upper bound
$m<p_0\cdot\ld(10)+N(\lambda(x)-\varepsilon)/\ln(2) - 1$ on
the needed precision $m$. Assume furthermore that $m$ is
big enough such that $\hat{x}_N$ and $\hat{y}_N$ is computed
with the demanded precision, that is
$|\hat{x}_N - x_N|\leq 10^{-p_0}|x_N|$ and
$|\hat{y}_N - y_N|\leq 10^{-p_0}|y_N|$ holds. Using
(\ref{sep}) gives $\hat{x}_N\neq\hat{y}_N$.
But this is a contradiction since $\hat{y}_0=\hat{x}_0$.
So the upper bound on $m$ calculated above is still too small.
Hence, $m\geq p_0\cdot\ld(10)+N(\lambda(x)-\varepsilon)/\ln(2)-1$
must hold. Since Condition (\ref{cond}) also holds for any $N'>N$
and the same $p_0$ as well as for any $p\geq p_0$,
$\sigma(x,p)\geq(\lambda(x) - \varepsilon)/\ln(2)$
follows for all $p\geq p_0$. Computing $\sigma(x)$ finally
gives the assertion.
\end{proof}
Furthermore, if $\overline{\lambda}(x)=\lambda(x)$ holds, then Corollary
\ref{main_stat1} gives $\sigma(x)=\frac{1}{\ln(2)}\max(0,\lambda(x))$
for the algorithm presented at the beginning of this section.
Using the above proposition then leads to the following theorem.
\begin{theorem}
Let $(D,f)$ be as in Assumption \ref{ass:main} and
$x\in\IQ\cap D$ given such that $\lambda(x)$
exists and $\overline{\lambda}(x)=\lambda(x)$ holds. Consider an
algorithm computing an initial segment $(x_n)_{0\leq n\leq N}$
of the orbit of $x$ with relative error $\leq 10^{-p}$. Then this
algorithm has a loss of significance rate greater or equal to that
of the algorithm specified by the recursion (\ref{comp_rec1}) and
(\ref{comp_rec2}).
\end{theorem}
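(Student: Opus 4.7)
The plan is to combine the two bounds already established for the presented algorithm with the lower bound that applies to any algorithm meeting the specification. First I would invoke Corollary \ref{main_stat1} under the hypothesis $\overline{\lambda}(x)=\lambda(x)$: since the max of zero and $\lambda(x)$ agrees with the max of zero and $\overline{\lambda}(x)$, the corollary collapses to the equality
\begin{equation*}
\sigma(x)=\tfrac{1}{\ln(2)}\max(0,\lambda(x))
\end{equation*}
for the algorithm specified by the recursion (\ref{comp_rec1})--(\ref{comp_rec2}). Thus the task reduces to showing that no admissible algorithm can attain a smaller loss of significance rate than this quantity.

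Next I would split into two cases according to the sign of $\lambda(x)$. If $\lambda(x)\leq 0$, then the right-hand side above is $0$, and since $m_{min}(x,N,p)\geq 0$ by its definition as a precision, $\sigma(x)\geq 0$ holds trivially for any algorithm; optimality follows immediately. If instead $\lambda(x)>0$, I would directly apply the preceding proposition, which, for any algorithm producing an initial segment with relative error at most $10^{-p}$, yields the lower bound
\begin{equation*}
\sigma(x)\geq\tfrac{1}{\ln(2)}\lambda(x)=\tfrac{1}{\ln(2)}\max(0,\lambda(x)).
\end{equation*}
Comparing this with the equality obtained from Corollary \ref{main_stat1} shows that the presented algorithm attains exactly the minimum rate, proving the theorem.

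The assembly itself is essentially a two-line deduction; the real content has already been absorbed into the preceding proposition (which is the substantive lower bound, driven by the sensitive dependence on initial conditions when $\lambda(x)>0$) and into the earlier Theorem \ref{thm:ubnd} together with Corollary \ref{main_stat1} (which pin down the rate of the presented algorithm by $\overline{\lambda}(x)$). The only subtlety to flag is that the lower-bound proposition is stated only for $\lambda(x)>0$, which is why the case split is needed; the $\lambda(x)\leq 0$ case must be handled separately by the trivial nonnegativity of $m_{min}$. No estimates or limits need to be recomputed, since the hypothesis $\overline{\lambda}(x)=\lambda(x)$ is precisely what forces the upper and lower bounds on $\sigma(x)$ in Corollary \ref{main_stat1} to coincide.
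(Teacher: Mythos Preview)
Your proposal is correct and follows essentially the same approach as the paper: combine Corollary~\ref{main_stat1} under the hypothesis $\overline{\lambda}(x)=\lambda(x)$ with the preceding proposition to obtain optimality. Your explicit case split on the sign of $\lambda(x)$ is in fact more careful than the paper, which glosses over the fact that the lower-bound proposition is only stated for $\lambda(x)>0$.
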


%%%%%%%%%%%%%%%%%%%%%%%%%%%%%%%%%%%%%%%%%%%%%%%%%%%%%%%%%%%%%%%%%%%%%%%%%%%%%%%%%%%%%%
\section{Conclusions}
In this paper, two main issues are addressed. First it is
shown that a mathematically rigorous treatment of the
computability aspects of the iteration of a real function
in terms of arbitrary-precision floating-point arithmetic
including automated error analysis is straightforward.
Also, this treatment is in a manner which is familiar to people
working in the field of numerical analysis or scientific computing
and also for theoretical computer scientists. Furthermore, the
approach does not only allow answers concerning the existence of
an algorithm which meets the requirements of computability theory,
but it also allows a treatment of its space complexity in form of
the loss of significance rate (which is actually the lookahead in
Computable Analysis) and optimality as discussed in the preceding
section. As a consequence, the approach here enables a motivated
reader the real implementation and supports a practical performance
analysis.

Second, the results show that the Lyapunov exponent, a central
quantity in dynamical systems theory, also finds its way into
complexity theory, a branch in theoretical computer science.
In dynamical systems theory, the Lyapunov exponent describes
the rate of divergence in the course of time of initially
infinitesimal nearby states. For two states having a small but
finite initial separation, the Lyapunov exponent has only
relevance for short time scales \cite{ce06}. The reason is that
due to the boundedness of the phase space, any two different
orbits cannot separate arbitrarily far away. However, the loss
of significance rate shows that the Lyapunov exponent has on
long time scales not only an asymptotic significance but also
a concrete practical one.

\subsection*{Acknowledgments}

The author wishes to express his gratitude to Peter Hertling
for helpful discussions and comments.

\bibliographystyle{model1b-num-names}
%\bibliography{../../../biblio/bib-all,../../../biblio/bibliography}

\end{document}